\newtheoremstyle{plain}
  {6pt}   
  {6pt}   
  {\itshape}  
  {0pt}       
  {\bfseries} 
  {.}         
  {5pt plus 1pt minus 1pt} 
  {}          
\newtheoremstyle{definition}
  {6pt}   
  {6pt}   
  {\normalfont}  
  {0pt}       
  {\bfseries} 
  {.}         
  {5pt plus 1pt minus 1pt} 
  {}          
\theoremstyle{plain}
\newtheorem*{thm*}{Theorem}
\newtheorem{thm}{Theorem}[section]
\newtheorem{prop}[thm]{Proposition}
\newtheorem{lem}[thm]{Lemma}
\theoremstyle{definition}
\newtheorem{defn}[thm]{Definition}
\newtheorem{ex}[thm]{Example}
\newtheorem{rmk}[thm]{Remark}
\numberwithin{equation}{thm}
\newcommand{\emphbf}[1]{\emph{\textbf{#1}}}
\DeclareMathAlphabet{\mathpzc}{OT1}{pzc}{m}{it}
\newcommand{\rad}[1]{\radoperator(#1)}
\DeclareMathOperator{\radoperator}{rad}
\DeclareMathOperator{\Hom}{Hom}
\DeclareMathOperator{\Ext}{Ext}
\DeclareMathOperator{\modu}{mod}
\DeclareMathOperator{\End}{End}
\DeclareMathOperator{\image}{Im}
\DeclareMathOperator{\op}{op}
\newcommand\ddfrac[2]{\frac{\displaystyle #1}{\displaystyle #2}}
\begin{document}

\title[Bocses, quasi-hereditary algebras \& Borel subalgebras]{From quasi-hereditary algebras with exact Borel subalgebras to directed bocses}
\author{Tomasz Brzezi\'nski}
\author{Steffen Koenig} 
\author{Julian K\"ulshammer}
\date{\today}

\address{Tomasz Brzezi\'nski\\
Department of Mathematics, Swansea University\\
Swansea University Bay Campus\\
Fabian Way\\
  Swansea SA1 8EN, U.K.\ \newline \indent
Department of Mathematics, University of Bia{\l}ystok\\ K.\ Cio{\l}kowskiego  1M\\
15-245 Bia\-{\l}ys\-tok, Poland}
\email{T.Brzezinski@swansea.ac.uk}

\address{Steffen Koenig\\
Institute of Algebra and Number Theory,
University of Stuttgart \\ Pfaffenwaldring 57 \\ 70569 Stuttgart,
Germany} \email{skoenig@mathematik.uni-stuttgart.de}

\address{
Julian K\"ulshammer\\
Department of Mathematics, Uppsala University\\
Box 480 \\ 75106 Uppsala, Sweden} \email{julian.kuelshammer@math.uu.se}

\begin{abstract}
Up to Morita equivalence, every quasi-hereditary algebra is the dual algebra 
of a directed bocs or coring. From the bocs, an exact Borel subalgebra is
obtained. In this paper a characterisation of exact Borel subalgebras arising 
in this way is given. 
\end{abstract}

\maketitle

\section{Introduction}

The theorem of Poincar\'e, Birkhoff and Witt is a
fundamental result in the representation theory of semisimple complex Lie
algebras $\mathfrak g$. When
$\mathfrak b$ is a Borel subalgebra of $\mathfrak g$ and $\mathfrak h$ a
Cartan subalgebra, then by the PBW theorem induction from $\mathfrak b$ to 
$\mathfrak g$ is an exact functor. In particular, inducing up simple 
$\mathfrak b$-modules (which coincide with the simple $\mathfrak h$-modules)
yields the universal highest weight modules, the Verma modules. The relevant
categorical setup is the Bernstein--Gelfand--Gelfand category $\mathcal O$, 
which decomposes into a direct sum of blocks. Each block is equivalent to
the module category of a quasi-hereditary algebra, whose standard modules
correspond to the Verma modules in the block. 

An analogue of the PBW theorem has been established in \cite{KKO14} for
quasi-hereditary algebras in general, thus also covering Schur algebras of
reductive algebraic groups, hereditary algebras and algebras of global
dimension two, among many others. More precisely, it has been shown that
up to Morita equivalence every quasi-hereditary algebra has an exact Borel
subalgebra in the sense of \cite{Koe95}.  
The construction given in \cite{KKO14} is based
on describing the exact category of standardly filtered modules of a
quasi-hereditary algebra as the category of representations 
of a certain bocs 
(i.e.\ a ``bimodule over a category with coalgebra structure'' \cite{Roi79}) or coring (i.e.\ a comonoid in the monoidal category of bimodules \cite{Swe75}).
This characterises quasi-hereditary algebras. The exact
Borel subalgebras obtained by this construction have strong additional
properties, which are not present in other such algebras constructed in
particular situations such as for category $\mathcal O$ (in \cite{Koe95}). 
In particular, induction preserves significant parts of 
cohomology and Ringel duality becomes a
construction on the directed bocs. 

The aim of this article is to analyse and clarify the stronger properties of 
the exact Borel subalgebras arising from the construction in \cite{KKO14}, 
and to show that these properties are present exactly when the 
quasi-hereditary algebra $\Lambda$ and the given 
exact Borel subalgebra $B$
correspond to a directed bocs as in \cite{ KKO14}. More
precisely, to the ring extension $B \subseteq \Lambda$ we associate the $B$-coring 
$\Hom_{B^{\op}}(\Lambda,B)$ and
translate properties of this ring extension into properties of the corresponding coring. Subsequently we show that directed bocses necessarily correspond in this way to quasi-hereditary algebras with
 exact Borel subalgebras.
The first step,
Theorem \ref{surjectivecounit}, translates properties such as $\Lambda$ being 
a progenerator as right $B$-module into surjectivity of the counit of the $B$-coring
$\Hom_{B^{\op}}(\Lambda,B)$ and the 
existence of a group-like element. The second step, 
Theorem \ref{kernelprojectivising}, relates
invariance of cohomology under induction with the kernel of the counit being
projectivising. The main result, Theorem \ref{maintheorem}, 
then gives the desired converse
of the main result of \cite{KKO14}, showing that the strong properties of
the exact Borel subalgebra $B$ imply that $B \subseteq \Lambda$ comes from a bocs
as in \cite{KKO14}.
\bigskip

{\em Notation.}
For a finite dimensional algebra $B$, the category of left $B$-modules is
denoted by $\modu B$, the category of right $B$-modules by $\modu B^{\op}$. A 
complete set of representatives of the isomorphism classes of simple left 
$B$-modules is denoted by $L_B(\mathtt{1}),\dots,L_B(\mathtt{n})$. 
The projective cover and injective hull of $L_B(\mathtt{i})$ are 
$P_B(\mathtt{i})$ and $I_B(\mathtt{i})$, respectively. 

\section{Rings and corings}

In this section we recall the necessary terminology for bocses and corings and describe how certain properties of ring extensions translate to the language of corings by taking the dual. 

\begin{defn}
Let $B$ be an algebra. A \emphbf{$B$-coring}  is a $B$-$B$-bimodule $W$ with a $B$-$B$-bilinear coassociative comultiplication $\mu \colon W\to W\otimes_B W$ and a $B$-$B$-bilinear counit $\varepsilon\colon W\to B$, i.e. the following diagrams commute:
\[\begin{tikzcd}
W\arrow{r}{\mu}\arrow{d}{\mu} &W\otimes_B W\arrow{d}{\mu\otimes 1}\\
W\otimes_B W\arrow{r}{1\otimes \mu} &W\otimes_BW\otimes_B W
\end{tikzcd} \text{ and } 
\begin{tikzcd}
W\otimes_B B\arrow{dr}[swap]{\cong} &W\otimes_B W\arrow{l}[swap]{1\otimes \varepsilon} \arrow{r}{\varepsilon\otimes 1} &B\otimes_B W.\arrow{dl}{\cong}\\
&W\arrow{u}{\mu}
\end{tikzcd}
\]
The pair $(B,W)$ is called a  \emphbf{bocs}.\footnote{Although it might not be a generally accepted practice, in order to stay aligned with the existing literature both on representation theory and on Hopf algebras, we use both terms ``coring'' and ``bocs'' in this text; coring  refers to a bimodule (with a coassociative and counital comultiplication) over a fixed algebra while  bocs refers to a pair: algebra, bimodule (with a coassociative and counital comultiplication).}
\end{defn}

The reader is referred to \cite{BW03} and \cite{BSZ09} for  comprehensive treatments of corings and bocses. Bocses will arise in this article as duals of ring extensions. The following proposition is well-known. A slightly more general statement can e.g. be found in \cite[Theorem 1]{Kle84}.

\begin{prop}
Let $B\subseteq \Lambda$ be a subring of a ring $\Lambda$ such that $\Lambda$ is finitely generated projective as a right $B$-module. Let $W:=\Hom_{B^{\op}}(\Lambda,B)$, and let 
\[\psi\colon W\otimes_B W\longrightarrow \Hom_{B^{\op}}(\Lambda\otimes_B \Lambda,B)\] 
be the map given by $f\otimes g\mapsto f\otimes g$. Then, the map $\psi$ is bijective and $W$  is finitely generated projective, when considered as a left $B$-module. 

Furthermore, $W$ has the structure of a $B$-coring with counit given by the map 
\[\varepsilon\colon \Hom_{B^{\op}}(\Lambda,B)\longrightarrow B,\qquad f\longmapsto f(1),\]
and comultiplication given by the unique map $\mu\colon W\to W\otimes_B W$ rendering commutative the following diagram: 
\[
\begin{tikzcd}
W\arrow{d}{\mu}\arrow[equals]{r}&\Hom_{B^{\op}}(\Lambda,B)\arrow{d}{\Hom_{B^ {\op}}(m,B)}\\
W\otimes_B W&\Hom_{B^{\op}}(\Lambda\otimes_B \Lambda,B)\arrow{l}[swap]{\psi^{-1}}\, ,
\end{tikzcd}
\] where $m$ is the multiplication map $\Lambda\otimes_B \Lambda\to \Lambda$.
\end{prop}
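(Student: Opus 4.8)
The plan is to regard the contravariant functor $\Hom_{B^{\op}}(-,B)$ as a device that turns the monoid structure of $\Lambda$ (multiplication $m\colon\Lambda\otimes_B\Lambda\to\Lambda$ and unit $\eta\colon B\to\Lambda$, $b\mapsto b$) into a comonoid, that is, coring, structure on $W$. Every claim will be reduced to the hypothesis that $\Lambda$ is finitely generated projective as a right $B$-module, used in the form of a dual basis: finitely many $\lambda_1,\dots,\lambda_n\in\Lambda$ and $\pi_1,\dots,\pi_n\in W$ with $\lambda=\sum_{i=1}^n\lambda_i\,\pi_i(\lambda)$ for all $\lambda\in\Lambda$. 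The crux is the bijectivity of $\psi$; granting that, the coring axioms follow formally by dualising the algebra axioms of $\Lambda$.

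For bijectivity of $\psi$ I would factor it as the composite
\[
W\otimes_B W\xrightarrow{\ \gamma\ }\Hom_{B^{\op}}(\Lambda,W)\xrightarrow{\ A\ }\Hom_{B^{\op}}(\Lambda\otimes_B\Lambda,B),
\]
where $A$ is the tensor-hom adjunction isomorphism $h\mapsto\bigl((\lambda\otimes\lambda')\mapsto h(\lambda)(\lambda')\bigr)$, which is always bijective, and $\gamma$ is the canonical evaluation map sending $w\otimes f$ to $\bigl(\lambda\mapsto w\cdot f(\lambda)\bigr)$, with $f(\lambda)\in B$ acting on $w$ through the right $B$-action on the first tensor factor. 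The map $\gamma$ is an isomorphism precisely because $\Lambda$ is finitely generated projective as a right $B$-module: it is bijective for $\Lambda=B$, hence for finitely generated free modules by additivity, hence for direct summands; equivalently, the dual basis furnishes the explicit two-sided inverse $h\mapsto\sum_{i}h(\lambda_i)\otimes\pi_i$. A short check shows $\gamma$ and $A$ are $B$-$B$-bilinear, so that $\psi$ is an isomorphism of bimodules, and tracing through the composite recovers the stated assignment. Finitely generated projectivity of $W$ as a left $B$-module then follows by applying $\Hom_{B^{\op}}(-,B)$ to a right $B$-module splitting $\Lambda\oplus\Lambda'\cong B^n$: this yields $W\oplus\Hom_{B^{\op}}(\Lambda',B)\cong\Hom_{B^{\op}}(B^n,B)\cong B^n$ as left $B$-modules.

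It remains to install the coring structure. The counit $\varepsilon(f)=f(1)$ is exactly $\Hom_{B^{\op}}(\eta,B)$ under the identification $\Hom_{B^{\op}}(B,B)\cong B$, and a direct computation shows it is $B$-$B$-bilinear. The comultiplication $\mu=\psi^{-1}\circ\Hom_{B^{\op}}(m,B)$ is well defined and bilinear because $\psi$ is an isomorphism of bimodules, which is the content of the commuting square. For coassociativity I would dualise the associativity $m\circ(m\otimes\id)=m\circ(\id\otimes\,m)$ and transport it across the threefold analogue $W\otimes_B W\otimes_B W\cong\Hom_{B^{\op}}(\Lambda\otimes_B\Lambda\otimes_B\Lambda,B)$ of $\psi$; naturality of $\psi$ identifies both composites $(\mu\otimes\id)\circ\mu$ and $(\id\otimes\,\mu)\circ\mu$ with $\Hom_{B^{\op}}(m\circ(m\otimes\id),B)=\Hom_{B^{\op}}(m\circ(\id\otimes\,m),B)$, whence they agree. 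The two counit axioms are obtained in the same manner from the unit axioms $m\circ(\eta\otimes\id)=\id=m\circ(\id\otimes\,\eta)$ of $\Lambda$.

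The main obstacle is the bijectivity of $\psi$ together with the careful bookkeeping of the two commuting $B$-actions on $W$ — the left action $(b\cdot f)(\lambda)=b\,f(\lambda)$ inherited from the codomain and the right action $(f\cdot b)(\lambda)=f(b\lambda)$ inherited from the domain — and of the various tensor balancings, which must be matched correctly for $\gamma$, for the adjunction $A$, and for the threefold version used in coassociativity. Once these identifications are pinned down consistently, every remaining verification is a formal consequence of applying the functor $\Hom_{B^{\op}}(-,B)$ to the associativity and unit diagrams of the ring $\Lambda$.
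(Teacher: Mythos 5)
Your proposal is correct, but there is nothing in the paper to compare it against: the paper states this proposition without proof, declaring it well-known and referring to Kleiner \cite[Theorem 1]{Kle84} (with the coring-theoretic background in Sweedler \cite{Swe75} and \cite[Section 17]{BW03}). Your argument is in fact the standard one underlying those references: factor $\psi$ as the evaluation map $\gamma\colon W\otimes_B W\to \Hom_{B^{\op}}(\Lambda,W)$ followed by the hom-tensor adjunction, observe that $\gamma$ is an isomorphism exactly because $\Lambda_B$ is finitely generated projective (dual basis $\{\lambda_i,\pi_i\}$ giving the inverse $h\mapsto\sum_i h(\lambda_i)\otimes\pi_i$), and then obtain coassociativity and counitality by transporting the associativity and unit axioms of $\Lambda$ across $\psi$ and its threefold analogue. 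The details you flag as the main obstacle do check out: with the bimodule structure $(b\cdot f\cdot b')(\lambda)=bf(b'\lambda)$ on $W$, your $\gamma$ is balanced, and the composite gives $\psi(f\otimes g)(\lambda\otimes\lambda')=f(g(\lambda)\lambda')$, for which both counit axioms and the compatibility squares $\psi_3\circ(\id\otimes\mu)=\Hom_{B^{\op}}(m\otimes\id,B)\circ\psi$ and $\psi_3\circ(\mu\otimes\id)=\Hom_{B^{\op}}(\id\otimes m,B)\circ\psi$ hold (note the harmless crossing of factors, which is immaterial since $m\circ(m\otimes\id)=m\circ(\id\otimes m)$). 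So your write-up supplies a complete proof of a statement the paper only cites.
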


In the theory of corings and bocses, two basic properties are that the counit 
is surjective, and that the bocs is normal:

\begin{defn}
A bocs $(B,W)$ is called \emphbf{normal} if there exists a \emphbf{group-like} element in the $B$-coring $W$, i.e. $\omega\in W$ such that $\mu(\omega)=\omega\otimes \omega$ and $\varepsilon(\omega)=1$. 
\end{defn}

\begin{rmk}
\begin{itemize}
\item It is easy to see that normality implies that the counit is surjective. The converse is not true in general. 
\item If $B$ has no (left or right) zero divisors  and $\omega\neq 0$, then  $\varepsilon(\omega)=1$ in fact follows from $\mu(\omega)=\omega\otimes\omega$, since the counitality implies that $\varepsilon(\omega)$ is an idempotent in $B$. 
\end{itemize}
\end{rmk}

Our first result answers the question of when a ring extension gives rise to a bocs satisfying these special properties.

\begin{thm}\label{surjectivecounit}
Let $B\subseteq \Lambda$ be a ring extension. Assume that $\Lambda$ is finitely generated and projective as a right $B$-module. 
\begin{enumerate}[(i)]
\item The following statements are equivalent:
\begin{enumerate}[(1)]
\item The inclusion $\iota\colon B\hookrightarrow \Lambda$ splits as a map of right $B$-modules.
\item $\Lambda/\iota(B)$ is projective as a right $B$-module.
\item $\Lambda$ is a projective generator as a right $B$-module. 
\item The counit of the right dual coring $\Hom_{B^{\op}}(\Lambda,B)$ is surjective.  
\end{enumerate}
\item The following statements are equivalent:
\begin{enumerate}[(1)]
\item There is a splitting of the inclusion $\iota\colon B\hookrightarrow \Lambda$ as right $B$-modules whose kernel is a right ideal of $\Lambda$. 
\item The right dual coring $\Hom_{B^{\op}}(\Lambda,B)$ is normal. 
\end{enumerate}
\end{enumerate}
\end{thm}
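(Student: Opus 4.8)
The plan is to route everything through one elementary observation: for $f\in W=\Hom_{B^{\op}}(\Lambda,B)$, right $B$-linearity gives $f(b)=f(1\cdot b)=f(1)b$ for all $b\in B$, so $\varepsilon(f)=f(1)=1$ holds if and only if $f$ restricts to the identity on $B$, i.e.\ if and only if $f$ is a splitting of $\iota$ as a map of right $B$-modules. Moreover $\varepsilon$ is a $B$-$B$-bimodule map, so $\im\varepsilon$ is a two-sided ideal of $B$ and $\varepsilon$ is surjective precisely when $1\in\im\varepsilon$. Putting these together yields the equivalence $(1)\Leftrightarrow(4)$ of part (i) at once, and it also identifies the candidates for a group-like element in part (ii) with splittings of $\iota$.

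For the remaining equivalences of (i) I would proceed as follows. The equivalence $(1)\Leftrightarrow(2)$ is the splitting lemma for the short exact sequence $0\to B\xrightarrow{\iota}\Lambda\to\Lambda/\iota(B)\to0$ of right $B$-modules: a right $B$-linear splitting of $\iota$ is the same as a splitting of this sequence, which exists if and only if $\Lambda/\iota(B)$ is projective, the backward direction using that a direct summand of the projective module $\Lambda_B$ is again projective. For $(1)\Rightarrow(3)$, a splitting exhibits $B$ as a direct summand of $\Lambda_B$, so $\Lambda$ generates the regular module and is therefore a (projective) generator. For the converse $(3)\Rightarrow(1)$ I would use the trace ideal: being a generator means $1=\sum_i f_i(\lambda_i)$ for some $f_i\in W$ and $\lambda_i\in\Lambda$; since left multiplication $\ell_{\lambda_i}\colon\Lambda\to\Lambda$, $\lambda\mapsto\lambda_i\lambda$, is right $B$-linear, the map $g:=\sum_i f_i\fcomp\ell_{\lambda_i}\in W$ satisfies $g(1)=\sum_i f_i(\lambda_i)=1$, and by the first paragraph $g$ is the desired splitting.

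For part (ii) the core step is to make the group-like condition explicit. Using that $\Lambda_B$ is finitely generated projective, I would fix a dual basis $(x_i,\xi_i)$ with $\lambda=\sum_i x_i\xi_i(\lambda)$ and check that the isomorphism $\psi$ of the Proposition is given by $\psi(f\otimes g)(\lambda\otimes\lambda')=f(g(\lambda)\lambda')$, so that $\mu(w)=\sum_i w(x_i\,\cdot\,-)\otimes\xi_i$, where $w(x_i\,\cdot\,-)$ denotes $\lambda'\mapsto w(x_i\lambda')$. Feeding $\psi(\mu(\omega))=\omega\fcomp m$ and the formula for $\psi$ into the equation $\mu(\omega)=\omega\otimes\omega$ converts it into the pointwise identity $\omega(\lambda\lambda')=\omega(\omega(\lambda)\lambda')$ for all $\lambda,\lambda'\in\Lambda$. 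A group-like element is thus exactly a right $B$-linear $\omega$ with $\omega(1)=1$ satisfying this identity.

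It then remains to match this identity with the ideal condition. Writing $K:=\ker\omega$, so that $\Lambda=\iota(B)\oplus K$ and every $\lambda$ decomposes as $\lambda=\omega(\lambda)+(\lambda-\omega(\lambda))$ with $\lambda-\omega(\lambda)\in K$, the identity $\omega(\lambda\lambda')=\omega(\omega(\lambda)\lambda')$ is equivalent to $\omega\big((\lambda-\omega(\lambda))\lambda'\big)=0$ for all $\lambda,\lambda'$, that is, to $\omega(k\lambda')=0$ for all $k\in K$ and $\lambda'\in\Lambda$; since such $k$ exhaust $K$, this says precisely $K\Lambda\subseteq K$, i.e.\ $K$ is a right ideal. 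This establishes $(1)\Leftrightarrow(2)$ of part (ii). I expect the main obstacle to be the explicit determination of $\psi$ and hence of $\mu$: one must pin down the correct $B$-$B$-bimodule structure on $W$ and verify that $\psi(f\otimes g)(\lambda\otimes\lambda')=f(g(\lambda)\lambda')$ is well defined over $\otimes_B$ and compatible with the counit. Once this normalisation is fixed, the translation of the group-like equation and the passage to the ideal condition are routine.
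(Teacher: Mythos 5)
Your proof is correct, and in two places it takes a genuinely different, more economical route than the paper. For part (i), the paper also proves $(1)\Leftrightarrow(2)$ via the splitting lemma, $(1)\Rightarrow(3)$ by noting $B$ is a summand of $\Lambda_B$, and $(3)\Rightarrow(4)$ by exactly your trace-ideal computation; but for $(4)\Rightarrow(1)$ it applies $\Hom_{B^{\op}}(-,B)$ to $0\to B\xrightarrow{\iota}\Lambda\to\Lambda/\iota(B)\to 0$ and uses projectivity of $\Lambda_B$ in the long exact sequence to force $\Ext^1_{B^{\op}}(\Lambda/\iota(B),B)=0$, whence the sequence splits. Your opening observation --- that $\varepsilon(f)=f(1)=1$ forces $f\circ\iota=\id_B$ by right $B$-linearity, so a preimage of $1$ under $\varepsilon$ \emph{is} a splitting --- makes this homological detour unnecessary and simultaneously closes the cycle as $(3)\Rightarrow(1)$. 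For part (ii), the paper does not argue at all: it cites Kleiner's theorem \cite[Theorem 3]{Kle84}. You instead reprove it, and your bookkeeping is sound: the normalisation $\psi(f\otimes g)(\lambda\otimes\lambda')=f(g(\lambda)\lambda')$ is the only reading of the paper's ``$f\otimes g\mapsto f\otimes g$'' that is balanced over $\otimes_B$ and compatible with the stated counit; with it, injectivity of $\psi$ converts $\mu(\omega)=\omega\otimes\omega$ into $\omega(\lambda\lambda')=\omega(\omega(\lambda)\lambda')$, and since $K=\ker\omega=\{\lambda-\omega(\lambda):\lambda\in\Lambda\}$, this identity is precisely $K\Lambda\subseteq K$. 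So your argument makes the statement self-contained where the paper outsources it, at the cost of pinning down $\psi$ explicitly --- a cost you correctly identified as the main technical point. One cosmetic slip: in $(1)\Leftrightarrow(2)$, the direction that uses ``a direct summand of the projective module $\Lambda_B$ is again projective'' is $(1)\Rightarrow(2)$; the backward direction $(2)\Rightarrow(1)$ uses only the lifting property of the projective quotient.
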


\begin{proof}
\begin{enumerate}[(i)]
\item We start by showing that (1) and (2) are equivalent. Assume that (1) holds, i.e. that the exact sequence $0\to B\xrightarrow{\iota} \Lambda\to \Lambda/\iota(B)\to 0$ splits. In particular, $\Lambda/\iota(B)$ is a direct summand of the projective right $B$-module $\Lambda$ and thus is itself projective. Therefore (2) holds. Conversely, the projectivity of $\Lambda/\iota(B)$ implies that the sequence splits in the category of right $B$-modules.

That (1) implies (3) follows from the fact that $\Lambda$ is a generator as a 
right $B$-module, since $B$ is a direct summand of $\Lambda$.

The direction that (3) implies (4) uses parts of Morita theory. Recall that an equivalent formulation of $\Lambda$ being a generator over $B$ is that the trace of $\Lambda$ in $B$ is $B$, i.e. that $B=\sum_{g\in \Hom_{B^{\op}}(\Lambda,B)} g(\Lambda)$, see e.g. \cite[Theorem 18.8]{Lam99}. In particular, for every $b\in B$, there exist $a_i\in \Lambda$ and $g_i\in \Hom_{B^{\op}}(\Lambda,B)$ such that 
\[b=\sum_i g_i(a_i)=\sum_i g_i(\lambda_{a_i}(1))\]
where $\lambda_{a_i}$ denotes the right $B$-module endomorphism of $\Lambda$ given by left  multiplication with $a_i$. In particular, $b=\varepsilon(\sum_i g_i\circ \lambda_{a_i})$, and therefore the counit is surjective. 

For the remaining direction (4) implies (1) observe that $\varepsilon$ is surjective if and only if $\Hom_{B^{\op}}(\iota,B)$ is surjective as the following diagram commutes
\[
\begin{tikzcd}[column sep=10ex]
W\arrow{r}{\varepsilon}&B\\
\Hom_{B^{\op}}(\Lambda,B)\arrow[equals]{u}\arrow{r}{\Hom_{B^{\op}}(\iota,B)} &\Hom_{B^{\op}}(B,B).\arrow{u}{\cong}
\end{tikzcd}
\]
Consider the exact sequence of right $B$-modules $0\to B\xrightarrow{\iota} \Lambda\to \Lambda/\iota(B)\to 0$. Applying $\Hom_{B^{\op}}(-,B)$ yields the long exact sequence 
\begin{align*}
0\longrightarrow \Hom_{B^{\op}}(\Lambda/\iota(B),B)\longrightarrow &\Hom_{B^{\op}}(\Lambda,B)\xrightarrow{\Hom_{B^{\op}}(\iota,B)} \Hom_{B^{\op}}(B,B)\\
&\longrightarrow \Ext^1_{B^{\op}}(\Lambda/\iota(B),B)\to \Ext^1_{B^{\op}}(\Lambda,B).
\end{align*}
The last term of this sequence vanishes as $\Lambda$ is projective as a right $B$-module. Moreover, $\Hom_{B^{\op}}(\iota,B)$ is surjective. Therefore, $\Ext^1_{B^{\op}}(\Lambda/\iota(B),B)=0$ and the exact sequence of right $B$-modules $0\to B\xrightarrow{\iota} \Lambda\to \Lambda/\iota(B)\to 0$ splits.
\item This is proved in \cite[Theorem 3]{Kle84}. \qedhere
\end{enumerate}
\end{proof}

\begin{ex}
For illustration, we provide non-instances of the preceding theorem. 
\begin{enumerate}[(i)]
\item Let $\Lambda=M_2(\Bbbk)$ and let $B$ be the subalgebra of upper triangular matrices. Let $e_\mathtt{2}=\left(\begin{smallmatrix}1&0\\0&0\end{smallmatrix}\right)\in B$. Then 
\[
\Lambda_B=\begin{pmatrix}1&0\\0&0\end{pmatrix}B\oplus \begin{pmatrix}0&0\\1&0\end{pmatrix}B\cong (e_\mathtt{2}B)^{\oplus 2}
\] 
is finitely generated and projective as a right $B$-module but not a projective generator. Thus, the counit of the corresponding coring is not surjective. 
\item Let $\Lambda=M_2(\Bbbk)$ and let $B$ be the subalgebra of diagonal matrices. In this case the algebra $\Lambda$ is a projective generator over $B$. However, the corresponding coring (described in \cite[p. 308]{Roi79}) is not normal. 
\end{enumerate}
\end{ex}

Continuing the discussion on how to translate properties of bocses to properties of ring extensions we consider the property that the kernel of the counit $\varepsilon \colon W\to B$ is a projective bimodule (and the slightly weaker property of being projectivising in the sense of \cite{BB91}). We slightly altered the terminology in \cite{BB91} by having a separate terminology for the two properties.

\begin{defn}
Let $B$ be a ring. A $B$-$B$-bimodule $U$ is called \emphbf{left projectivising} if $U\otimes_B X$ is projective as a left $B$-module for each $X\in \modu B$. Symmetrically, a bimodule is called \emphbf{right projectivising} if $X\otimes_B U$ is projective as a right $B$-module for each $X\in \modu B^{\op}$. A bimodule $U$ is called \emphbf{projectivising} if it is both left and right projectivising.   
\end{defn}

Any $B$-$B$-bimodule of the form $\bigoplus_{i,j} P_i\otimes_\Bbbk Q_j$ for a projective left module $P_i$ and a projective right module $Q_j$ is both left and right projectivising. 
Over a perfect field, a $B$-$B$-bimodule is projective if and only if it is isomorphic to a direct summand of a module of the above form. In particular, it is left and right projectivising. In this case, the following proposition shows that also the converse holds.

\begin{prop}[{\cite[Theorem 3.1]{AR91b}}]\label{perfect}
Let $B$ be a finite dimensional algebra over a perfect field $\Bbbk$ and let $W$ be a $B$-$B$-bimodule such that $W$ is projective as a right module and $W\otimes_B (B/\rad{B})$ is projective as a left module. Then, $W$ is projective as a $B$-$B$-bimodule. In particular, a right projective and left projectivising bimodule is projective as a bimodule.
\end{prop}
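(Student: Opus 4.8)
The plan is to identify $B$-$B$-bimodules with left modules over the enveloping algebra $B^{\mathrm e}:=B\otimes_\Bbbk B^{\op}$ and to show that the projective cover of $W$ over $B^{\mathrm e}$ is already an isomorphism, by a dimension count. Throughout write $S:=B/\rad{B}$, let $P_1,\dots,P_n$ be the indecomposable projective left $B$-modules with tops $T_i:=\Kopf P_i$, let $Q_1,\dots,Q_n$ be the indecomposable projective right $B$-modules with tops $S_j:=\Kopf Q_j$, and recall that $W$ is finite dimensional. First I would record what perfectness buys us: since $\Bbbk$ is perfect, $S$ is a separable $\Bbbk$-algebra, and consequently $\rad{B^{\mathrm e}}=\rad{B}\otimes_\Bbbk B^{\op}+B\otimes_\Bbbk\rad{B}^{\op}$. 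Hence $B^{\mathrm e}/\rad{B^{\mathrm e}}\cong S\otimes_\Bbbk S^{\op}$ is semisimple, the top functor for $B^{\mathrm e}$-modules is $\Kopf_{B^{\mathrm e}}(-)=S\otimes_B(-)\otimes_B S$, and each $P_i\otimes_\Bbbk Q_j$ is an indecomposable projective $B^{\mathrm e}$-module, these exhausting the indecomposable projective bimodules.

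The crux is the next step. I claim that $\overline W:=W\otimes_B S$ is projective not merely as a left $B$-module (the hypothesis) but as a $B$-$S$-bimodule, i.e.\ as a left $B\otimes_\Bbbk S^{\op}$-module. Here I would use that $S^{\op}$ is separable over $\Bbbk$, so that the ring extension $B\hookrightarrow B\otimes_\Bbbk S^{\op}$ is separable; by the standard relative-homological criterion, a module over $B\otimes_\Bbbk S^{\op}$ that is projective after restriction to $B$ is already projective over $B\otimes_\Bbbk S^{\op}$ (it is a $B\otimes_\Bbbk S^{\op}$-summand of the induced module, which is projective because the restriction is). Applying this to $\overline W$ and decomposing into indecomposable projective bimodules yields $\overline W\cong\bigoplus_{i,j}(P_i\otimes_\Bbbk S_j)^{c_{ij}}$ for suitable multiplicities $c_{ij}$. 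This is the step I expect to be the main obstacle, as it is exactly where the hypothesis on $W\otimes_B S$ and the perfectness of $\Bbbk$ must be converted into a genuine tensor factorisation; right-projectivity of $W$ alone does not suffice, because $B$ itself is not separable.

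Now set $P:=\bigoplus_{i,j}(P_i\otimes_\Bbbk Q_j)^{c_{ij}}$, a projective $B^{\mathrm e}$-module. Using the top functor from the first step, together with $S\otimes_B P_i=T_i$ and $Q_j\otimes_B S=S_j$, one computes
\[
\Kopf_{B^{\mathrm e}}W=S\otimes_B\overline W\cong\bigoplus_{i,j}(T_i\otimes_\Bbbk S_j)^{c_{ij}}\cong S\otimes_B P\otimes_B S=\Kopf_{B^{\mathrm e}}P
\]
as $S\otimes_\Bbbk S^{\op}$-modules. Hence $P$ is a projective cover of $W$, and there is a surjection $\pi\colon P\twoheadrightarrow W$ of bimodules. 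It remains to show that $\pi$ is an isomorphism, for which I would compare dimensions. Right-projectivity gives $W\cong\bigoplus_j Q_j^{\,d_j}$ as right $B$-modules, whence $\overline W\cong\bigoplus_j S_j^{\,d_j}$ as right $S$-modules; comparing this with the right-$S$-module structure of the decomposition of $\overline W$ from the previous step, in which $P_i\otimes_\Bbbk S_j$ restricts to $S_j^{\dim_\Bbbk P_i}$, forces $d_j=\sum_i c_{ij}\dim_\Bbbk P_i$ by Krull--Schmidt over the semisimple ring $S$. Therefore
\[
\dim_\Bbbk W=\sum_j d_j\dim_\Bbbk Q_j=\sum_{i,j}c_{ij}\dim_\Bbbk P_i\dim_\Bbbk Q_j=\dim_\Bbbk P,
\]
so the surjection $\pi$ is an isomorphism and $W$ is projective as a bimodule.

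Finally, the ``in particular'' clause is immediate: if $W$ is right projective and left projectivising, then applying the defining property of left projectivising to $X=B/\rad{B}$ shows that $W\otimes_B(B/\rad{B})$ is projective as a left $B$-module, so the hypotheses of the proposition hold and $W$ is projective as a bimodule.
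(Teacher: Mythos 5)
Your proposal cannot be checked against an argument in the paper, because the paper does not prove this proposition: it is quoted from Auslander--Reiten \cite[Theorem 3.1]{AR91b}. Judged on its own merits, your central step is correct and well chosen: upgrading the hypothesis that $\overline W=W\otimes_B(B/\rad{B})$ is projective as a left $B$-module to projectivity of $\overline W$ as a left $B\otimes_\Bbbk S^{\op}$-module, via separability of $S=B/\rad{B}$ (this is where perfectness enters) and the relative-homological criterion for the separable extension $B\hookrightarrow B\otimes_\Bbbk S^{\op}$. That part is sound.

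The genuine gap is in the structure theory you invoke next, and it occurs in exactly the generality the proposition claims. Over a perfect field that is not a splitting field for $B$, it is false that the bimodules $P_i\otimes_\Bbbk Q_j$ are indecomposable and false that they exhaust the indecomposable projective $B^{\mathrm e}$-modules; the same failure occurs for the $P_i\otimes_\Bbbk S_j$ over $B\otimes_\Bbbk S^{\op}$. Concretely, take $\Bbbk=\mathbb{R}$ and $B=\mathbb{C}$: then $P_1\otimes_{\mathbb{R}}Q_1=\mathbb{C}\otimes_{\mathbb{R}}\mathbb{C}\cong\mathbb{C}\times\mathbb{C}$ decomposes, and the bimodule $W={}_{\mathbb{C}}\mathbb{C}_{\mathbb{C}}$ (which is indeed projective over $B^{\mathrm e}\cong\mathbb{C}\times\mathbb{C}$, as the proposition predicts) is $2$-dimensional over $\mathbb{R}$, hence cannot be a direct sum of copies of the $4$-dimensional module $P_1\otimes_{\mathbb{R}}S_1$. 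So the asserted decomposition $\overline W\cong\bigoplus_{i,j}(P_i\otimes_\Bbbk S_j)^{c_{ij}}$, the identification of $P=\bigoplus_{i,j}(P_i\otimes_\Bbbk Q_j)^{c_{ij}}$ as a projective cover of $W$, and the multiplicity count $d_j=\sum_i c_{ij}\dim_\Bbbk P_i$ all break down. The root cause is that simple modules can have nontrivial endomorphism division algebras, so $T_i\otimes_\Bbbk S_j$ is in general only semisimple, not simple. Note also that you flagged the separability step as the main obstacle; in fact that step is fine, and the obstacle is here.

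The argument as written is correct when $B$ is split over $\Bbbk$ --- in particular when $\Bbbk$ is algebraically closed, which is the only case needed for Theorem \ref{maintheorem} --- and it can be repaired in the perfect case. Either (a) reduce to $\bar{\Bbbk}$: perfectness gives $\rad(B\otimes_\Bbbk\bar{\Bbbk})=\rad{B}\otimes_\Bbbk\bar{\Bbbk}$, so both hypotheses pass to $B\otimes_\Bbbk\bar{\Bbbk}$, and projectivity descends because $\Ext^1_{B^{\mathrm e}}(W,N)\otimes_\Bbbk\bar{\Bbbk}\cong\Ext^1_{B^{\mathrm e}\otimes_\Bbbk\bar{\Bbbk}}(W\otimes_\Bbbk\bar{\Bbbk},N\otimes_\Bbbk\bar{\Bbbk})$; or (b) avoid indecomposables via a Wedderburn--Malcev splitting $S'\subseteq B$ of $B\to S$: then for any semisimple bimodule $V$ one has $P_{B^{\mathrm e}}(V)\cong B\otimes_{S'}V\otimes_{S'}B$, projectivity of $\overline W$ over $B\otimes_\Bbbk S^{\op}$ gives $\overline W\cong B\otimes_{S'}K$ with $K=\Kopf_{B^{\mathrm e}}(W)$, right projectivity of $W$ gives $W\cong\overline W\otimes_{S'}B$ as right $B$-modules, whence $\dim_\Bbbk W=\dim_\Bbbk\bigl(B\otimes_{S'}K\otimes_{S'}B\bigr)$, and your concluding step (a surjection from a projective bimodule of the same dimension is an isomorphism) finishes the proof.
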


The next statement follows from the proof of \cite[Lemma 3.6]{BB91}. For convenience of the reader we include the proof.

\begin{lem}\label{injectivising}
Let $B$ be a ring. A $B$-$B$-bimodule $U$ that is projective as a right $B$-module is left projectivising if and only if $\Hom_B(U,X)$ is injective for every left $B$-module $X$.
\end{lem}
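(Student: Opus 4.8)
The plan is to run everything through the adjoint pair $(U\otimes_B-,\ \Hom_B(U,-))$ of endofunctors on the category of left $B$-modules. Since $U$ is projective, hence flat, as a right $B$-module, the functor $U\otimes_B-$ is exact, and the tensor-hom adjunction supplies a natural isomorphism
\[
\Hom_B(U\otimes_B Y,X)\;\cong\;\Hom_B(Y,\Hom_B(U,X))
\]
for all left $B$-modules $Y,X$. I would then translate both halves of the claimed equivalence into exactness statements about Hom-functors, using the standard dictionary that a module is injective (resp.\ projective) exactly when the contravariant Hom-functor into it (resp.\ the covariant Hom-functor out of it) is exact. The whole proof is an exercise in reading the displayed isomorphism once with $X$ fixed and once with $Y$ fixed.

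For the forward implication, assume $U$ is left projectivising. Reading the adjunction with $X$ fixed gives $\Hom_B(-,\Hom_B(U,X))\cong\Hom_B(U\otimes_B-,X)$, so it suffices to show the right-hand contravariant functor is exact. Given a short exact sequence $0\to Y'\to Y\to Y''\to 0$ of left $B$-modules, exactness of $U\otimes_B-$ produces a short exact sequence $0\to U\otimes_B Y'\to U\otimes_B Y\to U\otimes_B Y''\to 0$, and applying $\Hom_B(-,X)$ yields a long exact sequence whose only possible obstruction to right exactness is the term $\Ext^1_B(U\otimes_B Y'',X)$. This vanishes because $U\otimes_B Y''$ is projective by hypothesis, so $\Hom_B(U,X)$ is injective. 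I expect this direction to be routine once the dictionary is in place.

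For the converse I would read the adjunction with $Y$ fixed, obtaining $\Hom_B(U\otimes_B Y,-)\cong\Hom_B(Y,\Hom_B(U,-))$, so that projectivity of $U\otimes_B Y$ becomes exactness of this covariant functor. The hypothesis enters through the reformulation extracted from the forward analysis: injectivity of $\Hom_B(U,X)$ for all $X$ is equivalent to $U\otimes_B m$ being a split monomorphism for every monomorphism $m$ of left $B$-modules, i.e.\ tensoring any short exact sequence of left modules with $U$ yields a split sequence. Taking a projective presentation $0\to K\to P\to Y\to 0$ and tensoring with $U$ then realises $U\otimes_B Y$ as a direct summand of $U\otimes_B P$, reducing the claim to recognising $U\otimes_B P$ as projective for projective $P$. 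I expect precisely this last reduction to be the main obstacle: passing from the splitting data that $\Hom_B(U,-)$ injective delivers to genuine projectivity of $U\otimes_B Y$ forces one to control $U\otimes_B P$, and this is the point at which the right-projectivity of $U$ must be used essentially rather than merely for flatness. This is where I would expect to reproduce the technical core of \cite{BB91}, and I would concentrate the effort on assembling the individual splittings into an honest projective direct-summand decomposition.
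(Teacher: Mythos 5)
Your forward implication is correct and is the same argument as the paper's: run the short exact sequence through the exact functor $U\otimes_B-$, transport across the tensor--Hom adjunction, and kill the obstruction using projectivity of $U\otimes_B Y''$ (the paper phrases this through splitness of the tensored sequence; your $\Ext^1_B(U\otimes_B Y'',X)=0$ is the same point). Your converse also follows the paper's route as far as it goes: the hypothesis plus adjunction yields that tensoring any monomorphism with $U$ gives a split monomorphism, whence $U\otimes_B Y$ is a direct summand of $U\otimes_B P$ for a projective presentation $0\to K\to P\to Y\to 0$. But the step you flag as the remaining obstacle --- that $U\otimes_B P$ is projective for projective $P$, equivalently that ${}_BU\cong U\otimes_B B$ is projective as a left module --- is a genuine gap, and it cannot be closed from the stated hypotheses, because it is false in this generality. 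Take $B=\Bbbk[x]/(x^2)$, $S=B/\rad{B}$, and $U=S\otimes_\Bbbk B$, with left action through $B\twoheadrightarrow S$ and right action by multiplication on the second factor. Then $U\cong B$ as a right $B$-module, and for every left $B$-module $X$ a direct computation gives $\Hom_B(U,X)\cong\Hom_\Bbbk(B,\Hom_B(S,X))$, with left action induced by right multiplication on $B$; this is a direct sum of copies of the left regular module ${}_BB$, hence injective, $B$ being self-injective and Noetherian. Yet $U\otimes_B X\cong S\otimes_\Bbbk X$ is a nonzero semisimple, non-projective left module for every $X\neq 0$. So ``$\Hom_B(U,X)$ injective for all $X$'' together with right projectivity does not imply left projectivising, and no assembly of the splitting data can prove otherwise.

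This does not mean you missed a trick that the paper has: the paper's own proof of the converse is the single assertion that injectivity of $\Hom_B(U,X)$ makes \eqref{equation3} start with a split exact sequence ``and thus $U$ is left projectivising'', i.e.\ it makes exactly the leap --- from all tensored sequences splitting to projectivity of $U\otimes_B Y$ --- that you declined to make. Your instinct that this is the crux is correct; the missing ingredient is not a finer use of right projectivity nor the machinery of \cite{BB91}, but an additional hypothesis. If one assumes in addition that $U$ is projective as a \emph{left} $B$-module, then $U\otimes_B P$ is a direct summand of a direct sum of copies of ${}_BU$, hence projective, and your argument closes verbatim. That extra hypothesis is available in the paper's only application of the lemma, Theorem \ref{kernelprojectivising}: there $U=\overline{W}=\ker\varepsilon$ is a left-module direct summand of $W=\Hom_{B^{\op}}(\Lambda,B)$ (the sequence $0\to\overline{W}\to W\to B\to 0$ splits on the left since ${}_BB$ is projective), and $W$ is finitely generated projective as a left $B$-module; so both the lemma and your proof should be read with that hypothesis added.
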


\begin{proof}
Let $0\to M_1\to M_2\to M_3\to 0$ be an exact sequence of left $B$-modules. Since $U$ is projective as a right $B$-module, also the sequence 
\begin{equation}\label{equation1}
0\to U\otimes_B M_1\to U\otimes_B M_2\to U\otimes_B M_3\to 0
\end{equation}
is exact. Let $X$ be a left $B$-module.  Applying the functor $\Hom_B(-,X)$ to the above exact sequence yields the exact sequence
\begin{equation}\label{equation2}
\begin{aligned}
0\longrightarrow\Hom_B(U\otimes_B M_3,X)&\longrightarrow \Hom_B(U\otimes_B M_2,X) \\ &\longrightarrow \Hom_B(U\otimes_B M_1,X)
\longrightarrow \Ext^1_B(U\otimes_B M_3,X).
\end{aligned}
\end{equation}
Similarly, applying $\Hom_B(-,\Hom_B(U,X))$ to the given exact sequence yields the exact sequence
\begin{equation}\label{equation3}
\begin{aligned}
0\to \Hom_B(M_3,\Hom_B(U,X)) &\to \Hom_B(M_2,\Hom_B(U,X))\\ &\to \Hom_B(M_1,\Hom_B(U,X))\to  \Ext^1_B(M_3,\Hom_B(U,X)).
\end{aligned}
\end{equation}
Adjointness of tensor and Hom provides natural identifications of
the three Hom-spaces in the exact sequence \eqref{equation2} with those in
the exact sequence \eqref{equation3}. When $U$ is left projectivising, 
each sequence \eqref{equation1} and hence also \eqref{equation2} is split exact.
Therefore the corresponding part of the sequence  \eqref{equation3} is split
exact, too, and $\Hom_B(U,X)$ is injective for every left $B$-module $X$.
Conversely, injectivity of $\Hom_B(U,X)$ shows that \eqref{equation3} starts
with a split exact sequence and thus $U$ is left projectivising.
\end{proof}

Before stating which property of a subalgebra corresponds to the kernel of the counit of the dual coring being projectivising we recall the notion of a right algebra from \cite[(1.1')]{BB91}. 

\begin{defn}\label{def.right.alg}
Let $(B,W)$ be a bocs. The \emphbf{right algebra} $R$ of $W$ is the algebra $\Hom_B(W,B)^{\op}$. Here, the product of $s,t\in \Hom_B(W,B)$ is given as the composition of the following maps 
\[W\xrightarrow{\mu} W\otimes_B W\xrightarrow{1\otimes s} W\otimes_B B\xrightarrow{\cong} W\xrightarrow{t} B,\]
and the identity is $\varepsilon$.
\end{defn}

\begin{rmk}
As mentioned, the terminology is from the theory of bocses. The literature on corings calls this algebra the opposite algebra of the left dual algebra, see e.g. \cite{BW03}.  That $\Hom_B(W,B)$ is an algebra for any $B$-coring $W$ as in Definition~\ref{def.right.alg} was observed by Sweedler already in \cite[3.2 Proposition]{Swe75}. The corresponding structure on $\Hom_{B^{\op}}(W,B)$ is called the left algebra in the bocs literature or the right dual algebra in the coring literature.
\end{rmk}

The following result from \cite[(3.8)]{BB91} translates the property of a bocs to have a surjective counit to the language of ring extensions. 

\begin{lem}\label{lem.proj.ext}
Let $(B,W)$ be a bocs with surjective counit. Let $\overline{W}:=\ker\varepsilon$ be left projectivising and right projective. Let $R$ be its right algebra. Then, the homomorphisms $\Ext^i_B(M,N)\to \Ext^i_R(R\otimes_B M,R\otimes_B N)$ induced by the induction functor are epimorphisms for $i= 1$ and isomorphisms for $i\geq 2$.
\end{lem}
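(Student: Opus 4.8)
The plan is to compare the two Ext groups by transporting a projective resolution of $M$ over $B$ to a projective resolution of $R\otimes_B M$ over $R$ along the induction functor $F := R\otimes_B(-)\colon \modu B\to \modu R$, and then to control the difference between $N$ and the $B$-module underlying $F(N)$ by a single short exact sequence whose third term is injective. Writing $\res\colon \modu R\to \modu B$ for restriction along the algebra homomorphism $B\to R$, $b\mapsto b\varepsilon$ (note $\varepsilon=1_R$), the engine of the proof is a natural short exact sequence of left $B$-modules
\[
0\longrightarrow N\xrightarrow{\ \iota_N\ } \res F(N)\longrightarrow \Hom_B(\overline W,N)\longrightarrow 0,
\]
together with the fact, supplied by Lemma \ref{injectivising}, that $\Hom_B(\overline W,N)$ is injective as a left $B$-module; here $\overline W$ is right projective and left projectivising, which are exactly the hypotheses of that lemma.

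First I would record the structural facts. Applying the left projectivising hypothesis to $X={}_BB$ gives $\overline W\cong \overline W\otimes_B B$ projective as a left $B$-module; since the bimodule surjection $\varepsilon\colon W\to B$ splits on either side (as $B$ is projective both as a left and as a right module), one obtains $W\cong B\oplus \overline W$ as left modules and as right modules, so that in the finite-dimensional framework at hand $W$ is finitely generated projective as a left $B$-module. Two consequences follow: $R=\Hom_B(W,B)$ is finitely generated projective as a right $B$-module, whence $F$ is exact; and, being left adjoint to the exact functor $\res$, $F$ preserves projectives. Moreover the evaluation map $\Theta\colon R\otimes_B N\to \Hom_B(W,N)$, $f\otimes n\mapsto (w\mapsto f(w)n)$, is then an isomorphism of left $B$-modules, under which $\res F(N)$ is identified with $\Hom_B(W,N)$ and the adjunction unit $\iota_N$ with precomposition by $\varepsilon$.

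Next I would produce the short exact sequence by applying $\Hom_B(-,N)$ to the exact sequence $0\to \overline W\to W\xrightarrow{\varepsilon}B\to 0$; since $B$ is projective as a left module, $\Ext^1_B(B,N)=0$, and using $\Hom_B(B,N)\cong N$ together with the identification above this yields precisely the displayed sequence, naturally in $N$. To assemble the conclusion, fix a projective resolution $P_\bullet\to M$ over $B$. Because $F$ is exact and preserves projectives, $F(P_\bullet)\to F(M)$ is a projective resolution over $R$, and the tensor--Hom adjunction gives a natural isomorphism of cochain complexes $\Hom_R(F(P_\bullet),F(N))\cong \Hom_B(P_\bullet,\res F(N))$, hence $\Ext^i_R(F(M),F(N))\cong H^i\Hom_B(P_\bullet,\res F(N))$. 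Applying the exact functor $\Hom_B(P_\bullet,-)$ (each $P_j$ being projective) to the short exact sequence and passing to cohomology produces the long exact sequence
\[
\cdots\to \Ext^i_B(M,N)\xrightarrow{\ \alpha_i\ }\Ext^i_R(F(M),F(N))\to \Ext^i_B\!\big(M,\Hom_B(\overline W,N)\big)\to\cdots,
\]
in which $\alpha_i$ is the map induced by the induction functor, being $(\iota_N)_*$ followed by the adjunction isomorphism. As $\Hom_B(\overline W,N)$ is injective, $\Ext^i_B(M,\Hom_B(\overline W,N))=0$ for all $i\ge 1$; feeding this into the long exact sequence shows that $\alpha_i$ is an isomorphism for $i\ge 2$ and an epimorphism for $i=1$, as claimed.

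The main obstacle is the pair of structural inputs — exactness of $F$ and the identification $\res F\cong \Hom_B(W,-)$ — which both rest on $W$ being finitely generated projective as a left module. Extracting left projectivity from the projectivising hypothesis is immediate (take $X=B$), but one must ensure that finite generation is genuinely available, which it is in the finite-dimensional setting where $\overline W$ is finitely generated, since the evaluation map $\Theta$ fails to be surjective for infinitely generated $W$. The second delicate point is the bookkeeping identifying the comparison map $\alpha_i$ of the long exact sequence with the map induced by induction; this is a naturality check using that the comparison factors through the adjunction unit $\iota_N$, and it is exactly what forces the asymmetry between degree $1$, where only surjectivity survives, and degrees $\ge 2$, where the flanking $\Ext$-terms both vanish.
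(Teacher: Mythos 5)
Your proof is correct and follows essentially the same route as the paper: the paper itself only cites \cite[(3.8)]{BB91} for this lemma, but its proof of the converse statement, Theorem~\ref{kernelprojectivising} (which it notes ``follows the same lines''), uses exactly your ingredients --- the short exact sequence $0\to N\to\Hom_B(W,N)\to\Hom_B(\overline{W},N)\to 0$, the identification of $\Hom_B(W,N)$ with the restriction of the induced module, the Eckmann--Shapiro isomorphism, and Lemma~\ref{injectivising} --- just read in the opposite logical direction. Your explicit verification of the structural inputs (finite generation of $W$, exactness of induction, projectivity of $R$ as a right $B$-module) in the abstract bocs setting is a sensible supplement that matches the standing finiteness assumptions of \cite{BB91} and of the paper's context.
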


The main result of this section is the converse to Lemma~\ref{lem.proj.ext}. The proofs of the two 
results follow the same lines. 

\begin{thm}\label{kernelprojectivising}
Let $B\subseteq \Lambda$ be a subalgebra such that $\Lambda$ is a projective generator for $B$ as a right $B$-module. Assume furthermore, that the homomorphisms 
\[\Ext^i_B(M,N)\to \Ext^i_\Lambda(\Lambda\otimes_B M,\Lambda\otimes_B N)\]
induced by the induction functor are epimorphisms for $i\geq 1$ and isomorphisms for $i\geq 2$. Let $W=\Hom_{B^{\op}}(\Lambda,B)$ be the right dual coring for $\Lambda$ and define $\overline{W}:=\ker\varepsilon$. Then $\overline{W}$ is left projectivising.  
\end{thm}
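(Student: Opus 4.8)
The plan is to reduce the statement to the vanishing of a single $\Ext^1$ over $B$ and to read that vanishing off from the hypothesis by means of the adjunction between induction and restriction. First I would record the two short exact sequences governing the situation. Since $\Lambda$ is a projective generator as a right $B$-module, Theorem~\ref{surjectivecounit} guarantees that the counit $\varepsilon$ is surjective and that $\iota\colon B\hookrightarrow\Lambda$ splits as right $B$-modules; in particular $\varepsilon$ surjects onto the projective right module $B$ and therefore splits, so $\overline{W}$ is a right $B$-module direct summand of $W$. The projectivity of $\overline{W}$ as a right $B$-module (which is what makes Lemma~\ref{injectivising} applicable, and which is exactly the right-projectivity hypothesis of Lemma~\ref{lem.proj.ext}) I would establish as a preliminary step from this summand description. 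Writing $\overline\Lambda:=\Lambda/\iota(B)$, which is projective as a right $B$-module by the splitting, left-exactness of $\Hom_{B^{\op}}(-,B)$ identifies $\overline{W}=\ker\varepsilon$ with $\Hom_{B^{\op}}(\overline\Lambda,B)$. As $\overline\Lambda$ is finitely generated projective as a right $B$-module, the evaluation isomorphism gives $\Hom_B(\overline{W},X)\cong\overline\Lambda\otimes_B X$ naturally in the left $B$-module $X$, so by Lemma~\ref{injectivising} it suffices to prove that $\overline\Lambda\otimes_B X$ is injective as a left $B$-module for every $X$.

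The heart of the argument is a homological comparison. The induction functor $\Lambda\otimes_B-$ is exact (as $\Lambda$ is projective as a right $B$-module) and sends projectives to projectives, and it is left adjoint to restriction $\res$; these facts yield Eckmann--Shapiro isomorphisms
\[\Ext^i_\Lambda(\Lambda\otimes_B M,\Lambda\otimes_B N)\cong\Ext^i_B(M,\res(\Lambda\otimes_B N))\]
for all $i$ and all left $B$-modules $M,N$, under which the induction map of the hypothesis corresponds to the map $\Ext^i_B(M,\eta_N)$ induced by the unit $\eta_N\colon N\to\Lambda\otimes_B N$, $n\mapsto 1\otimes n$. Tensoring the bimodule sequence $0\to\iota(B)\to\Lambda\to\overline\Lambda\to 0$ with $N$ over $B$, which stays exact because $\overline\Lambda$ is right projective, realises $\eta_N$ as the inclusion in the short exact sequence of left $B$-modules
\[0\to N\xrightarrow{\eta_N}\Lambda\otimes_B N\to\overline\Lambda\otimes_B N\to 0.\]

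Finally I would apply $\Ext^\bullet_B(M,-)$ to this sequence and feed in the hypothesis. In the resulting long exact sequence, surjectivity of $\Ext^1_B(M,\eta_N)$ (the case $i=1$) makes the next map $\Ext^1_B(M,\Lambda\otimes_B N)\to\Ext^1_B(M,\overline\Lambda\otimes_B N)$ zero, so $\Ext^1_B(M,\overline\Lambda\otimes_B N)$ injects into $\Ext^2_B(M,N)$ via the connecting homomorphism; injectivity of $\Ext^2_B(M,\eta_N)$ (the case $i=2$) shows that this connecting homomorphism has image zero, whence $\Ext^1_B(M,\overline\Lambda\otimes_B N)=0$ for all $M,N$. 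Letting $M$ range over all left $B$-modules, this says precisely that $\overline\Lambda\otimes_B N$ is injective as a left $B$-module for every $N$, which by the reduction of the first paragraph finishes the proof that $\overline{W}$ is left projectivising. I expect the main obstacle to be the second paragraph: one must verify carefully that induction is exact and preserves projectives so that the Eckmann--Shapiro isomorphisms hold in \emph{all} degrees, and, more delicately, that under these isomorphisms the natural induction map genuinely corresponds to $\Ext^i_B(M,\eta_N)$, since it is exactly this identification that converts the hypothesis on induction into the vanishing of $\Ext^1_B(M,\overline\Lambda\otimes_B N)$.
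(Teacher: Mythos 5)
Your proposal follows essentially the same route as the paper's proof, transported along the evaluation isomorphisms. The paper applies $\Hom_B(-,N)$ to the bimodule sequence $0\to \overline{W}\to W\xrightarrow{\varepsilon} B\to 0$ and identifies $\Hom_B(W,N)\cong \Lambda\otimes_B N$; you instead tensor $0\to B\xrightarrow{\iota}\Lambda\to\overline{\Lambda}\to 0$ with $N$ and identify $\Hom_B(\overline{W},N)\cong\overline{\Lambda}\otimes_B N$. Since $\varepsilon$ is $\Hom_{B^{\op}}(\iota,B)$ up to the identification $\Hom_{B^{\op}}(B,B)\cong B$, these produce literally the same short exact sequence $0\to N\xrightarrow{\eta_N}\Lambda\otimes_B N\to I\to 0$, and from there the two arguments coincide: Eckmann--Shapiro, the long exact sequence for $\Hom_B(M,-)$, the epi-at-$i=1$ together with mono-at-$i=2$ forcing $\Ext^1_B(M,I)=0$, hence $I$ injective, and finally Lemma \ref{injectivising}. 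The compatibility you flag as delicate (that under Eckmann--Shapiro the induction maps become $\Ext^i_B(M,\eta_N)$) is indeed required, and the paper passes over it just as silently.

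The one step of yours that would fail is the preliminary claim that $\overline{W}$ is projective as a right $B$-module because it is a right-module direct summand of $W$. This presupposes that $W$ itself is right projective, which is neither assumed nor known: the right $B$-action on $W=\Hom_{B^{\op}}(\Lambda,B)$ is induced by the \emph{left} $B$-action on $\Lambda$, and every hypothesis of the theorem concerns only $\Lambda$ as a right $B$-module. The gap is harmless, though, because the right-projectivity hypothesis of Lemma \ref{injectivising} is not what the implication you use actually needs. In the direction ``$\Hom_B(U,X)$ injective for all $X$ implies $U$ left projectivising'', the splitting argument already makes $U\otimes_B M_1\to U\otimes_B M_2$ a (split) injection, so right projectivity is not used; what is genuinely needed is that $U$ be projective as a \emph{left} $B$-module, so that $U\otimes_B M$, being a summand of $U\otimes_B F$ for $F$ free, is projective. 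Without left projectivity that implication is false: for $B=\Bbbk[x]/(x^2)$ and $U=(B/\rad{B})\otimes_\Bbbk B$, the left module $\Hom_B(U,X)$ is injective for every $X$, while $U\otimes_B X\cong (B/\rad{B})^{\dim X}$ is not projective for $X\neq 0$. The needed left projectivity of $\overline{W}$ is exactly what your identification $\overline{W}\cong\Hom_{B^{\op}}(\overline{\Lambda},B)$ supplies, as $\overline{\Lambda}$ is finitely generated projective as a right $B$-module. So replace the faulty right-projectivity step by this observation and your argument is complete; note that the paper's own proof requires the same touch-up, since it also invokes Lemma \ref{injectivising} without verifying its stated right-projectivity hypothesis (there one uses instead that the sequence $0\to\overline{W}\to W\to B\to 0$ splits as left modules and $W$ is left projective).
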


\begin{proof}
By Theorem \ref{surjectivecounit}, $W=\Hom_{B^{\op}}(\Lambda,B)$ is a $B$-coring  with surjective counit. Hence, there is a short exact sequence of $B$-$B$-bimodules
\[0\longrightarrow \overline{W}\longrightarrow W\longrightarrow B\longrightarrow 0,\] 
which splits as a sequence of left modules. Applying $\Hom_B(-,N)$ for a left $B$-module $N$ thus gives a short exact sequence
\begin{equation}\label{equation4}
0\longrightarrow N\longrightarrow\Hom_B(W,N)\longrightarrow \Hom_B(\overline{W},N)\longrightarrow 0\, .
\end{equation}
As $\Lambda$ is finitely generated projective as a right $B$-module, there is an isomorphism 
\[\Hom_B(W,N)\cong \Lambda\otimes_B N.\]
Furthermore, by the Eckmann--Shapiro Lemma, see e.g. \cite[(3.1)]{BB91}, the same assumption yields isomorphisms
\[\Ext^i_B(M,\Lambda\otimes_B N)\cong \Ext^i_\Lambda(\Lambda\otimes_B M,\Lambda\otimes_B N),\]
for all left $B$-modules $M$,$N$.
Applying $\Hom_B(M,-)$ to the exact sequence \eqref{equation4}, using these isomorphisms and denoting $\Hom_B(\overline{W},N)$ by $I$ yields the long exact sequence
\[\dots\longrightarrow\Ext^i_B(M,N)\longrightarrow \Ext^i_\Lambda(\Lambda\otimes_B M,\Lambda\otimes_B N)\longrightarrow \Ext^i_B(M,I)\longrightarrow \dots\]
The assumed epimorphism for $i=1$ and isomorphisms for $i\geq 2$ then imply 
$\Ext^i_B(M,I)=0$, for all $i\geq 1$. Hence, $I=\Hom_B(\overline{W},N)$ is an injective module for all left $B$-modules $N$. By Lemma \ref{injectivising}, this is equivalent to $\ker\varepsilon$ being left projectivising.
\end{proof}

We finish this section by providing an example of a non-instance of the preceding theorem from \cite[Appendix A.4]{KKO14}. 

\begin{ex}\label{ex.quiver}
Let $\Lambda$ be the path algebra of the quiver
\[
\begin{tikzcd}
\mathtt{2}\arrow[yshift=0.5ex]{rr}{\alpha}&&\mathtt{3}\arrow[yshift=-0.5ex]{ll}{\beta}\arrow{ld}{\delta}\\
&\mathtt{1}\arrow{lu}{\gamma}
\end{tikzcd}
\]
with relations $\gamma\delta=0=\alpha\beta$. Then $\Lambda$ has a subalgebra $B$ given by the subquiver with arrows $\alpha$ and $\gamma$, over which it is a projective generator as a right $B$-module. Then ${0=\Ext^1_B(L(\mathtt{1}),L(\mathtt{3}))}$ but $\dim \Ext^1_B(\Lambda\otimes_B L(\mathtt{1}),\Lambda\otimes_B L(\mathtt{3}))=1$. 
\end{ex}

\section{Homological exact Borel subalgebras}

In this section the results obtained in the previous section are applied to the setting of quasi-hereditary algebras and exact Borel subalgebras. For further reading on quasi-hereditary algebras we suggest the original articles \cite{Sco87, CPS88} and the survey articles \cite{DR92, KK99}. We start by recalling these notions. Throughout this section assume that $\Bbbk$ is a splitting field for $\Lambda$. 

\begin{defn}
A finite dimensional algebra $\Lambda$ is \emphbf{quasi-hereditary} if there exist modules $\Delta(\mathtt{i})$, $\mathtt{i}\in \{\mathtt{1},\dots,\mathtt{n}\}$ satisfying 
\begin{enumerate}[(QH1)]
\item $\End_\Lambda(\Delta(\mathtt{i}))\cong \Bbbk$.
\item $\Hom_\Lambda(\Delta(\mathtt{i}),\Delta(\mathtt{j}))\neq 0\Rightarrow \mathtt{i}\leq \mathtt{j}$. 
\item $\Ext^1_\Lambda(\Delta(\mathtt{i}),\Delta(\mathtt{j}))\neq 0\Rightarrow \mathtt{i}<\mathtt{j}$.
\item ${}_\Lambda\Lambda\in \mathcal{F}(\Delta)$, where $\mathcal{F}(\Delta)$ denotes the subcategory of all $\Lambda$-modules which can be filtered by the $\Delta(\mathtt{i})$, i.e. 
\[\mathcal{F}(\Delta)=\{M\in \modu \Lambda\,|\, \exists 0=M_0\subseteq M_1\subseteq \dots \subseteq M_s\text{ such that } \forall t \,\, \exists \mathtt{i}_t: M_t/M_{t+1}\cong \Delta(\mathtt{i}_t) \}.\]
\end{enumerate}
The modules $\Delta(\mathtt{i})$ are called the \emphbf{standard modules}. 
\end{defn}

\begin{ex}\label{examplequasihereditary}
Examples of quasi-hereditary algebras include blocks of the Bernstein--Gelfand--Gelfand category $\mathcal{O}$, classical and quantised Schur algebras of reductive algebraic groups and
of symmetric groups and of Brauer algebras, and algebras of global dimension smaller than or equal to two. For later use, we give an example of a monomial quasi-hereditary algebra. It appeared in a previous version of \cite{GS17}: Let $\Lambda$ be the path algebra of the quiver
\[\begin{tikzcd}
\mathtt{3}\arrow{r}{a}\arrow{d}{c}&\mathtt{4}\arrow{d}{b}\\
\mathtt{1}\arrow{r}{d}&\mathtt{2}\arrow{ul}[swap]{e}
\end{tikzcd}
\]
modulo the relations $dc-ba, ae, eb$. Its standard modules are given by $\Delta(\mathtt{1})=L(\mathtt{1})$, $\Delta(\mathtt{2})=L(\mathtt{2})$, $\Delta(\mathtt{3})=\begin{matrix}\mathtt{3}\\\mathtt{1}\end{matrix}$, and $\Delta(\mathtt{4})=\begin{matrix}\mathtt{4}\\\mathtt{2}\end{matrix}$. 
For further examples, see e.g. \cite{DR92, KK99}. 
\end{ex}

\begin{rmk}\label{costandard}
\begin{enumerate}[(i)]
\item Being quasi-hereditary can also be defined using a dual condition: An algebra is quasi-hereditary if and only if there exist modules $\nabla(\mathtt{i})$, $\mathtt{i}\in \{\mathtt{1},\dots,\mathtt{n}\}$ satisfying 
\begin{enumerate}[(QH1')]
\item $\End_\Lambda(\nabla(\mathtt{i}))\cong \Bbbk$.
\item $\Hom_\Lambda(\nabla(\mathtt{i}),\nabla(\mathtt{j}))\neq 0\Rightarrow \mathtt{i}\geq \mathtt{j}$.
\item $\Ext^1_\Lambda(\nabla(\mathtt{i}),\nabla(\mathtt{j}))\neq 0\Rightarrow \mathtt{i}>\mathtt{j}$.
\item $D\Lambda\in \mathcal{F}(\nabla)$. 
\end{enumerate}
\item The $\Delta(\mathtt{i})$ and $\nabla(\mathtt{i})$ are uniquely determined (given the implicit order on the simple modules). They can be defined as 
\[
\Delta(\mathtt{i})=\ddfrac{P(\mathtt{i})}{\sum_{\substack{g\colon P(\mathtt{j})\to P(\mathtt{i})\\\mathtt{j}>\mathtt{i}}} \image g}
\] 
and $\nabla(\mathtt{i})$ is the maximal submodule of $I(\mathtt{i})$ all of whose composition factors are of the form $L(\mathtt{j})$ for $\mathtt{j}\leq \mathtt{i}$. To check that an algebra is quasi-hereditary, one can also check that (QH1) and (QH4) (or (QH1') and 
(QH4')) 
are satisfied for these explicitly defined modules. 
\end{enumerate}
\end{rmk}

We recall the notion of an exact Borel subalgebra from \cite{Koe95} and provide the new notion of a homological
exact Borel subalgebra which will be precisely the class of exact Borel subalgebras giving rise to directed corings.

\begin{defn}
  Let $\Lambda$ be a quasi-hereditary algebra. A subalgebra $B\subseteq \Lambda$ is called an \emphbf{exact Borel subalgebra} if its isomorphism classes of simple modules can be indexed by the same indexing set $\{\mathtt{1},\dots,\mathtt{n}\}$ as the standard modules for $\Lambda$  and
\begin{itemize}
\item[(B1)] the algebra $\Lambda$ is directed, i.e. it is quasi-hereditary with simple standard modules,
\item[(B2)] the algebra $\Lambda$ is projective when considered as a right $B$-module,
\item[(B3)] the standard modules for $\Lambda$ can be obtained as $\Delta_\Lambda(\mathtt{i})=\Lambda\otimes_B L_B(\mathtt{i})$.
\end{itemize}
\begin{itemize}
\item[(H)  ] If the subalgebra $B$ furthermore  has the property that
the homomorphisms 
\[
\Ext^i_B(M,N)\longrightarrow \Ext^i_\Lambda(\Lambda\otimes_B M,\Lambda\otimes_B N)
\] 
induced by the induction functor are epimorphisms for $i\geq 1$ and isomorphisms for $i\geq 2$, then $B$ is called a \emphbf{homological
  exact Borel subalgebra}.
\item[(N)  ]
  An exact Borel subalgebra is said to be \emphbf{normal} if there is a splitting of the inclusion $\iota\colon B\hookrightarrow \Lambda$ as right $B$-modules whose kernel is a right ideal of $\Lambda$.
\item[(R)  ]
  A normal exact Borel subalgebra is said to be \emphbf{regular}
  if there are isomorphisms
$
\Ext^i_B(L_B(\mathtt{j}),L_B(\mathtt{k}))\longrightarrow \Ext^i_\Lambda(\Lambda\otimes_B L_B(\mathtt{j}),\Lambda\otimes_B L_B(\mathtt{k}))
$ 
for all $i\geq 1$ and all $\mathtt{j},\mathtt{k}$ induced by the induction functor. 
\end{itemize}
\end{defn} 

\begin{rmk}
Note that for a normal exact Borel subalgebra being regular implies being homological as can be seen from the long exact sequence of $\Ext$-groups. 
\end{rmk}

The main result of \cite{KKO14} proves the existence of a regular homological exact Borel subalgebra of every quasi-hereditary algebra up to Morita equivalence. The following example gives one particular instance of this result:

\begin{ex}
For the algebra in Example \ref{examplequasihereditary} the method given in \cite{KKO14} produces an exact Borel subalgebra of $\End(\Lambda\oplus P(\mathtt{4}))$ which is isomorphic to the path algebra of the quiver
\[
\begin{tikzcd}
\mathtt{3}\arrow{d}{d'}&\mathtt{2}\arrow{l}[swap]{f'}\\
\mathtt{4}&\mathtt{1}\arrow{u}{a'}\arrow{l}[swap]{e'}
\end{tikzcd}
\]
with relation $d'f'$. More examples of exact Borel subalgebras can be found in \cite{Koe95, KKO14, BK18}. 
\end{ex}

The following lemma is essential for applying Theorem \ref{surjectivecounit} to deduce that the counit of the dual coring is surjective. 

\begin{lem}\label{projectivegenerator}
Let $\Lambda$ be a quasi-hereditary algebra with exact Borel subalgebra $B$. Then $\Lambda$ is a projective generator for $B$ as a right $B$-module.
\end{lem}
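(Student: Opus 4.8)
The goal is to show that $\Lambda$ is a projective generator as a right $B$-module. By property (B2), $\Lambda$ is already projective as a right $B$-module, so the only thing to establish is that it is a \emph{generator}. By Theorem~\ref{surjectivecounit}(i), it suffices to prove any one of the equivalent conditions (1)--(4); the most accessible is condition (3) directly, or equivalently condition (1), that the inclusion $\iota\colon B\hookrightarrow\Lambda$ splits as a map of right $B$-modules.

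\medskip

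The plan is to exploit the fact that $B$ being an exact Borel subalgebra gives strong control over both the simple modules and the standard modules. The key numerical input is property (B3): the standard modules satisfy $\Delta_\Lambda(\mathtt{i})=\Lambda\otimes_B L_B(\mathtt{i})$. First I would compare the two algebras as right $B$-modules by decomposing $\Lambda_B$ into indecomposable projective summands. Since $\Lambda$ is projective over $B$, we have $\Lambda_B\cong\bigoplus_{\mathtt{i}} P_B(\mathtt{i})^{\oplus m_{\mathtt{i}}}$ for some multiplicities $m_{\mathtt{i}}\geq 0$, and the task reduces to showing every $m_{\mathtt{i}}\geq 1$, i.e.\ that each indecomposable projective right $B$-module occurs in $\Lambda_B$. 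Equivalently, the trace ideal of $\Lambda$ in $B$ must be all of $B$, which is exactly the generator condition used in the proof of Theorem~\ref{surjectivecounit}.

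\medskip

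The cleanest route I foresee is to count dimensions using the standard filtration of $\Lambda$. By (QH4), ${}_\Lambda\Lambda\in\mathcal{F}(\Delta)$, so as a left $\Lambda$-module $\Lambda$ is filtered by the standard modules $\Delta_\Lambda(\mathtt{i})$ with certain multiplicities, and these standard modules are induced from the simple $B$-modules by (B3). The induction functor $\Lambda\otimes_B-$ sends $P_B(\mathtt{i})$ to the projective $\Lambda$-module, and one can track how the simple $B$-modules $L_B(\mathtt{i})$ sit inside the $B$-module structure. The alternative and perhaps more direct argument is to apply $-\otimes_B L_B(\mathtt{i})$ or a Hom-functor to the inclusion $\iota$ and use that the standard modules $\Delta_\Lambda(\mathtt{i})=\Lambda\otimes_B L_B(\mathtt{i})$ are nonzero for every $\mathtt{i}$; since there are exactly $n$ standard modules indexed by the same set as the simple $B$-modules, and each arises by inducing a distinct $L_B(\mathtt{i})$, every simple right $B$-module (equivalently every indecomposable projective) must be detected in $\Lambda_B$, forcing all multiplicities $m_{\mathtt{i}}$ to be positive.

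\medskip

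The main obstacle I anticipate is bookkeeping the passage between left and right module structures: property (B3) is phrased in terms of \emph{left} induction, whereas the generator property we want concerns $\Lambda$ as a \emph{right} $B$-module. The cleanest way around this is to note that the bijection between simple left $B$-modules and standard modules, together with the fact that $\Lambda\otimes_B L_B(\mathtt{i})=\Delta_\Lambda(\mathtt{i})\neq 0$, implies $\Lambda\otimes_B -$ is a faithful functor on simples, which in turn forces each $P_B(\mathtt{i})$ to appear as a summand of $\Lambda_B$. I would make this precise by observing that if some $P_B(\mathtt{i})$ were missing from $\Lambda_B$, then $\Lambda\otimes_B L_B(\mathtt{i})=\Lambda\otimes_B(P_B(\mathtt{i})/\rad P_B(\mathtt{i}))$ would vanish, contradicting $\Delta_\Lambda(\mathtt{i})\neq 0$. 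This completes the argument, showing $\Lambda$ is a projective generator.
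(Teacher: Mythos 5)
Your proof is correct, but it takes a genuinely different route from the paper. Your argument is the elementary one: since $\Lambda_B$ is projective by (B2), write $\Lambda_B\cong\bigoplus_{\mathtt{j}}(e_{\mathtt{j}}B)^{\oplus m_{\mathtt{j}}}$; then $\Lambda\otimes_B L_B(\mathtt{i})\cong\bigoplus_{\mathtt{j}}\bigl(e_{\mathtt{j}}L_B(\mathtt{i})\bigr)^{\oplus m_{\mathtt{j}}}\cong \bigl(e_{\mathtt{i}}L_B(\mathtt{i})\bigr)^{\oplus m_{\mathtt{i}}}$, so $m_{\mathtt{i}}=0$ would force $\Delta_\Lambda(\mathtt{i})=\Lambda\otimes_B L_B(\mathtt{i})=0$, contradicting (QH1); hence every indecomposable projective right $B$-module occurs in $\Lambda_B$ and $\Lambda_B$ is a generator by Krull--Schmidt. (Just be careful with notation: the summands of $\Lambda_B$ are the right projectives $e_{\mathtt{i}}B$, not the left projectives $P_B(\mathtt{i})$ whose tops are the $L_B(\mathtt{i})$; the link between the two is $e_{\mathtt{j}}B\otimes_B L_B(\mathtt{i})\cong e_{\mathtt{j}}L_B(\mathtt{i})$, which vanishes unless $\mathtt{i}=\mathtt{j}$.) The paper instead argues dually via costandard modules: by Koenig's Theorem A, restriction gives $\nabla_B(\mathtt{i})\cong\nabla_\Lambda(\mathtt{i})$; since $B$ is directed its costandard modules are injective, so the $\nabla$-filtration of $D\Lambda$ splits over $B$, making $D\Lambda$ an injective cogenerator as a left $B$-module, and dualising yields the claim. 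Your approach buys simplicity and self-containedness --- it uses only (B2), (B3) and the non-vanishing of standard modules, with no external input. The paper's approach invokes a nontrivial theorem from \cite{Koe95} but in exchange yields finer structural information: it identifies $\Lambda_B$ explicitly as a direct sum of duals of costandard modules (in particular it re-derives projectivity of $\Lambda_B$ without even citing (B2)), which is in the spirit of the PBW-type statements the paper is after.
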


\begin{proof}
By \cite[Theorem A]{Koe95}, for an exact Borel subalgebra of a quasi-hereditary algebra there is an isomorphism of $B$-modules $\nabla_B(\mathtt{i})\cong \nabla_\Lambda(\mathtt{i})$. By the dual definition of quasi-hereditary using costandard modules, see Remark \ref{costandard}, $D\Lambda$ is filtered by $\nabla_\Lambda(\mathtt{i})$ with all $\mathtt{i}$ appearing at least once. Since $B$ is a quasi-hereditary algebra with simple standard modules, 
the costandard modules $\nabla_B(\mathtt{i})$ are injective. Therefore the filtration of $D\Lambda$ by costandard modules as a left $B$-module splits and $D\Lambda$ is an injective cogenerator as a left $B$-module as all $\nabla_B(\mathtt{i})$ appear at least once in a direct sum decomposition. Applying duality, it follows that $\Lambda$ is a projective generator as a right $B$-module. 
\end{proof}

We now recall the definition of a directed bocs and the main result of \cite{KKO14}. 

\begin{defn}
Let $\Bbbk$ be an algebraically closed field. A bocs $(B,W)$ is said to be \emphbf{directed} if $B$ is directed, $\overline{W}=\ker\varepsilon$ is a projective bimodule and every indecomposable direct summand $Be_{\mathtt{j}}\otimes_\Bbbk e_{\mathtt{i}}B$ of $\overline{W}$ satisfies $\mathtt{i}<\mathtt{j}$. 
\end{defn}

\begin{thm}
Let $\Bbbk$ be an algebraically closed field. An algebra $\Lambda$ is quasi-hereditary if and only if it is Morita equivalent to the right algebra $R$ of a directed bocs $(B,W)$. In this case, $B$ is an exact Borel subalgebra of $R$.
\end{thm}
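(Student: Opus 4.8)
The plan is to prove the theorem in two directions, building on the correspondence between ring extensions and corings established in Section~2. For the forward direction I would start with a quasi-hereditary algebra $\Lambda$ and invoke the main result of \cite{KKO14}, which produces, up to Morita equivalence, a directed bocs $(B,W)$ whose right algebra $R$ recovers $\Lambda$ and for which $B$ is a regular homological exact Borel subalgebra. Since the statement is only up to Morita equivalence, I may freely replace $\Lambda$ by $\End(\Lambda\oplus P)$ for a suitable projective $P$, so this half reduces to citing \cite{KKO14} and checking that the bocs produced there is directed in the precise sense of the definition just stated, namely that $B$ is directed, $\overline{W}=\ker\varepsilon$ is projective as a bimodule, and each indecomposable summand $Be_{\mathtt{j}}\otimes_\Bbbk e_{\mathtt{i}}B$ of $\overline{W}$ has $\mathtt{i}<\mathtt{j}$.

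The substantive direction is the converse: given a directed bocs $(B,W)$ with right algebra $R$, show $R$ is quasi-hereditary and $B$ is an exact Borel subalgebra of $R$. First I would record that directedness forces $\overline{W}$ to be projective as a bimodule, hence in particular right projective and left projectivising, and that the counit is surjective (indeed normal, via a group-like element, using the directedness of $B$ and the Remark following the definition of normality). By Lemma~\ref{lem.proj.ext} the induction functor $R\otimes_B(-)$ then preserves cohomology in the sense that the comparison maps $\Ext^i_B(M,N)\to\Ext^i_R(R\otimes_B M,R\otimes_B N)$ are epimorphisms for $i=1$ and isomorphisms for $i\geq 2$. The key computation is to identify $R=\Hom_B(W,B)^{\op}$ concretely as a ring extension of $B$: using that $W=\Hom_{B^{\op}}(\Lambda,B)$ is the dual coring of the extension $B\subseteq R$, I would show $B$ embeds in $R$ via $\varepsilon$ (the identity of $R$) and that $R$ is projective as a right $B$-module, with induction sending simple $B$-modules to the candidate standard modules.

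The heart of the proof is to verify the quasi-hereditary axioms (QH1)--(QH4) for $R$ using only the directedness data of the bocs. I would define $\Delta_R(\mathtt{i}):=R\otimes_B L_B(\mathtt{i})$ and use the grading/directedness condition $\mathtt{i}<\mathtt{j}$ on the summands of $\overline{W}$ to control $\Hom$ and $\Ext^1$ between these modules. Concretely, directedness of $B$ gives that $L_B(\mathtt{i})$ are the simple standard modules of $B$, and the cohomology-preservation from Lemma~\ref{lem.proj.ext} transports the vanishing and nonvanishing of $\Ext^1_B(L_B(\mathtt{i}),L_B(\mathtt{j}))$ (governed by the arrows of $B$, equivalently by the summands of $\overline{W}$) to the corresponding statements for $\Delta_R$. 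The condition $\mathtt{i}<\mathtt{j}$ is exactly what yields the triangularity (QH2) and (QH3), while (QH1) follows from $B$ being directed with $\End_B(L_B(\mathtt{i}))\cong\Bbbk$ over an algebraically closed field, and (QH4) follows from the projectivity of $R$ over $B$ together with the standard filtration of $R\otimes_B B$ by the $\Delta_R(\mathtt{i})$. Once $R$ is quasi-hereditary with these standard modules, conditions (B1)--(B3) for $B$ being an exact Borel subalgebra are immediate: (B3) holds by the definition $\Delta_R(\mathtt{i})=R\otimes_B L_B(\mathtt{i})$, (B2) is the right-projectivity of $R$ over $B$, and (B1) is the simplicity of the standard $B$-modules.

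The main obstacle I anticipate is the careful bookkeeping in transferring the homological invariants across the induction functor and matching them precisely to the quasi-hereditary axioms. The comparison maps of Lemma~\ref{lem.proj.ext} are only epimorphisms in degree one, so establishing the strict inequality in (QH3) and the nonstrict inequality in (QH2) requires knowing that the relevant $\Ext^1$ and $\Hom$ computations over $B$ are faithfully reflected, not merely dominated, by those over $R$; this is where the projectivity of $\overline{W}$ as a bimodule (rather than mere projectivising-ness) does real work, allowing Proposition~\ref{perfect} and an analogue of Lemma~\ref{injectivising} to sharpen the epimorphism into the control needed. Verifying that the group-like element exists and that the associated splitting of $\iota\colon B\hookrightarrow R$ has kernel a right ideal---so that normality in the sense of Theorem~\ref{surjectivecounit}(ii) holds---will also demand a concrete description of the comultiplication on $W$ restricted to $\overline{W}$, which is the most computation-heavy part of the argument.
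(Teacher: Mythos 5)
The paper itself offers no proof of this statement: it is recalled verbatim as the main result of \cite{KKO14}, and even inside the proof of Theorem \ref{maintheorem} the direction ``directed bocs $\Rightarrow$ the right algebra is quasi-hereditary with $B$ as exact Borel subalgebra'' is again dispatched by citing \cite{KKO14}. So your forward direction (citing \cite{KKO14}) coincides with what the paper does; your converse direction is an attempt at an independent proof, and that attempt has a genuine gap at its foundation.

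The gap is your claim that for a directed bocs ``the counit is surjective (indeed normal, via a group-like element, using the directedness of $B$ and the Remark following the definition of normality).'' The Remark cannot deliver this: it says that normality implies surjectivity of the counit (not conversely), and that \emph{if} $B$ has no zero divisors and an element $\omega\neq 0$ with $\mu(\omega)=\omega\otimes\omega$ is already given, then $\varepsilon(\omega)=1$ is automatic. It never produces a group-like element, and its hypothesis is vacuous here anyway: a directed algebra with at least two simple modules has orthogonal idempotents, hence zero divisors. Directedness also cannot imply normality for a structural reason: in Theorem \ref{maintheorem} the qualifier ``(normal)'' is an \emph{additional}, matching condition on both sides of the correspondence, which would be redundant if every directed bocs were normal. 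This gap is fatal to your argument as written, because everything downstream flows through surjectivity of the counit: it is an explicit hypothesis of Lemma \ref{lem.proj.ext}, your only source of the $\Ext$-comparison maps; it is what makes the canonical map $B\to R$ induced by $\varepsilon$ injective, without which the assertion ``$B$ is an exact Borel subalgebra of $R$'' does not even parse; and it is what yields the left $B$-module splitting $W\cong B\oplus\overline{W}$, from which finite generation and projectivity of $W$ as a left $B$-module (needed for your appeal to the dual coring identification), projectivity of $R$ as a right $B$-module, and hence your filtration argument for (QH4) all follow. Concretely, $B=\Bbbk\times\Bbbk$ with $W=Be_\mathtt{1}$, $\mu(x)=x\otimes e_\mathtt{1}$ and $\varepsilon$ the inclusion is a bocs with $\ker\varepsilon=0$, hence ``directed'' in the letter of the paper's definition, whose right algebra is $\Bbbk$; here $B$ does not embed into $R$ at all. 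So surjectivity (or normality) must be imposed or extracted from the actual construction, not deduced from the Remark. A secondary gap: your verifications of (QH1) and (QH2) are only gestured at --- the maps of Lemma \ref{lem.proj.ext} give no information in degree $0$, so one must compute the restriction of $\Delta(\mathtt{i})=R\otimes_B L_B(\mathtt{i})$ to $B$ from the bimodule extension $0\to B\to R\to\Hom_B(\overline{W},B)\to 0$ together with the condition $\mathtt{i}<\mathtt{j}$ on the summands of $\overline{W}$; this computation is precisely the substantive content of the proof in \cite{KKO14} and is missing from your outline.
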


In \cite{KKO14} it moreover has been shown that then the category of
representations of $(B,W)$ is equivalent as an exact category to the
category ${\mathcal F}(\Delta)$
of $\Lambda$-modules with standard filtrations. This description has
been used by Bautista, P\'erez and Salmer\'on in \cite{BPS17} to
establish the tame-wild dichotomy for the categories ${\mathcal F}(\Delta)$.

Before proving our main result, we recall the notion of regularity for normal bocses which is part of the reduction algorithm used in the proof of Drozd's tame-wild dichotomy theorem. It is essentially due to Kleiner and Ro\u{\i}ter \cite{KR77}, for the precise formulation see \cite[Proposition 3.11]{KM16}.

\begin{prop}\label{regularisation}
Let $\mathfrak{B}=(B,W)$ be a normal bocs with group-like $\omega$ and projective kernel $\overline{W}$. Define $\partial_0: B\to W$, by $\partial_0(a)=\omega a-a\omega$, and 
assume that there exists $a\in B$ such that $\partial_0(a)=\lambda\psi+\sum_i c_i\psi_ib_i$ with $b_i,c_i\in B$ and $\psi\neq \psi_i$ are generators of $\overline{W}$ and $0\neq \lambda\in \Bbbk$. Then, there is a bocs $\tilde{B}=(\tilde{B},\tilde{W})$ with $\tilde{B}=B/(a)$ and $\tilde{W}=\tilde{B}\otimes_B W/(\psi)\otimes_B \tilde{B}$ such that the following statements hold:
\begin{enumerate}[(i)]
\item There is an equivalence of categories $\modu \mathfrak{B}\cong \modu \tilde{\mathfrak{B}}$. 
\item If $\mathfrak{B}=(B,W)$ is directed, then $\tilde{\mathfrak{B}}=(\tilde{B},\tilde{W})$ is directed. 
\item The right algebra of $\mathfrak{B}$ is Morita equivalent to the right algebra of $\tilde{\mathfrak{B}}$. 
\end{enumerate}
\end{prop}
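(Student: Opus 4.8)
The plan is to carry out the regularisation step of the Kleiner--Ro\u{\i}ter reduction algorithm, producing an equivalence $\modu\mathfrak{B}\simeq\modu\tilde{\mathfrak{B}}$ and then deducing (ii) and (iii) from it. Recall that a representation of a bocs $(B,W)$ is a finite-dimensional left $B$-module, that a morphism $M\to N$ is a $B$-linear map $W\otimes_B M\to N$ with identity induced by $\varepsilon$ and composition built from $\mu$ as in Definition~\ref{def.right.alg}, and that the group-like element makes $\partial_0(a)=\omega a-a\omega$ land in $\overline{W}=\ker\varepsilon$ (since $\varepsilon(\omega a-a\omega)=\varepsilon(\omega)a-a\varepsilon(\omega)=a-a=0$). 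The hypothesis $\partial_0(a)=\lambda\psi+\sum_i c_i\psi_ib_i$ with $\lambda\in\Bbbk^\times$ and $\psi\neq\psi_i$ generators of $\overline{W}$ says that the generator $\psi$ occurs with an invertible coefficient; this is precisely the condition allowing one to eliminate $\psi$ together with the arrow $a$.

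First I would verify that $\tilde{\mathfrak{B}}=(\tilde{B},\tilde{W})$ is a well-defined bocs. The counit descends because $\psi\in\ker\varepsilon$, so $\varepsilon$ factors through $W/(\psi)$ and then through the base change to $\tilde{B}$; coassociativity and counitality for $\tilde{W}$ are then inherited. The nontrivial point is that the comultiplication $\mu$ is compatible with killing the sub-bimodule $(\psi)$ and passing to $\tilde{B}$-bimodules, and here the differential relation is exactly what is needed: modulo $(\psi)$ and $(a)$ the contributions coming from $\partial_0(a)$ are forced to cancel. Projectivity of $\overline{W}$ ensures that $\tilde{W}$ stays projective, its indecomposable summands being those summands $Be_{\mathtt{j}}\otimes_\Bbbk e_{\mathtt{i}}B$ of $\overline{W}$ that survive after removing $\psi$, base-changed along $B\to\tilde{B}$.

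Next I would construct the equivalence in (i). On objects, every representation $M$ of $\mathfrak{B}$ is isomorphic \emph{inside} $\modu\mathfrak{B}$ to one annihilated by $a$: bocs-isomorphisms are more general than $B$-linear ones, and the invertibility of $\lambda$ lets one gauge the action of $a$ into the prescribed form, landing in $\modu\tilde{\mathfrak{B}}=\modu(B/(a))$. On morphisms, the same invertibility shows that the component of a map $W\otimes_B M\to N$ supported on $\psi$ is determined by the components supported on $B$ and on the remaining $\psi_i$, yielding a bijection $\Hom_{\mathfrak{B}}(M,N)\cong\Hom_{\tilde{\mathfrak{B}}}(M,N)$; checking compatibility with the $\varepsilon$-identities and the $\mu$-composition then upgrades this to mutually quasi-inverse functors. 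I expect the main obstacle to lie here, in the careful bookkeeping of how $\psi$ and the relation $\partial_0(a)$ interact with the coproduct, and in confirming that the terms $\sum_i c_i\psi_ib_i$ are absorbed correctly so that no obstruction to functoriality survives.

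Finally, statement (ii) is inherited: $\tilde{B}=B/(a)$ is still directed, since quotienting a directed algebra by the ideal generated by an arrow removes extensions but preserves the triangular order on the (unchanged) set of vertices, and the surviving summands of $\tilde{W}$ still satisfy $\mathtt{i}<\mathtt{j}$. For (iii), I would invoke the identification of \cite{KKO14}, under which $\modu\mathfrak{B}$ and $\modu\tilde{\mathfrak{B}}$ are exact-equivalent to the categories $\mathcal{F}(\Delta)$ of filtered modules over the right algebras $R$ and $\tilde{R}$; the regular module is the projective generator of this exact category and its endomorphism ring recovers the right algebra up to Morita equivalence. Since the equivalence of (i) respects the exact structures and the vertex set is unchanged, it transports this projective generator, giving the asserted Morita equivalence of right algebras.
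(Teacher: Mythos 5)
The paper itself does not prove this proposition: it is recalled from Kleiner--Ro\u{\i}ter \cite{KR77}, with the precise formulation and proof in \cite[Proposition 3.11]{KM16}. So your attempt can only be measured against the standard reduction argument, and measured that way it is a correct \emph{roadmap} but not a proof: the two central verifications are explicitly deferred rather than carried out. First, the well-definedness of $\tilde{W}$ as a $\tilde{B}$-coring requires showing that $\mu$ descends, i.e.\ that $\mu(\psi)\equiv 0$ in $\tilde{W}\otimes_{\tilde{B}}\tilde{W}$; since $\mu(\psi)=\omega\otimes\psi+\psi\otimes\omega+\partial(\psi)$, this amounts to a computation with $\partial^2=0$ (coassociativity applied to $\partial_0(a)=\lambda\psi+\sum_i c_i\psi_i b_i$) showing $\partial(\psi)$ dies modulo $(\psi)$, $aW$ and $Wa$ --- your phrase ``the contributions coming from $\partial_0(a)$ are forced to cancel'' names the expected outcome but does not perform this computation. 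Second, for (i) you correctly identify both halves (essential surjectivity via gauging $a$ to zero, and the Hom-bijection obtained because $B$-linearity of $\varphi\colon W\otimes_B M\to N$ forces $\varphi(\psi\otimes m)=-\lambda^{-1}\sum_i c_i\varphi(\psi_i\otimes b_i m)$ when $a$ kills $M$ and $N$), but you then write that you ``expect the main obstacle to lie here'' --- compatibility of this bijection with $\mu$-composition, and the lemma that a bocs morphism with bijective $\omega$-leading term is an isomorphism (which itself uses projectivity/triangularity of $\overline{W}$), are precisely the content of the proof and are left undone.

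There is also a genuine logical gap in (iii). You argue via the identification of $\modu\mathfrak{B}$ with $\mathcal{F}(\Delta)$ from \cite{KKO14}, but that identification is available only for \emph{directed} bocses, whereas Proposition \ref{regularisation}(iii) is asserted for arbitrary normal bocses with projective kernel; moreover you assume without justification that the equivalence built in (i) respects the relevant exact structures and transports the projective generator. The argument that works in the stated generality goes through Burt--Butler theory \cite{BB91}: for a bocs with projectivising kernel the induction functor $M\mapsto R\otimes_B M$ to the right algebra $R$ is fully faithful, so $R^{\op}\cong\End_{\mathfrak{B}}(B)$, and one then tracks the image of the regular representation under the equivalence of (i) to compare $R$ and $\tilde{R}$ up to Morita equivalence. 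As it stands, your proposal would prove (iii) only in the directed case, and only after the unproved exactness claim is supplied.
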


\begin{defn}
A bocs $\mathfrak{B}$ is said to be  \emphbf{regular} if Proposition \ref{regularisation} cannot be applied anymore, 
that is, no $a$ exists with the required property.
\end{defn}

The following result was stated by Ovsienko in unpublished notes. A proof can be found in \cite[Lemma 5.3]{KM16}.  

\begin{lem}\label{regularityisomorphismext1}
Let $\mathfrak{B}=(B,W)$ be a directed normal bocs with right algebra $R$. Then the following conditions are equivalent:
\begin{enumerate}[(1)]
\item $\mathfrak{B}$ is regular.
\item $\Ext^1_B(\mathbb{L},\mathbb{L})\to \Ext^1_R(\Delta,\Delta)$ is an isomorphism.
\end{enumerate}
\end{lem}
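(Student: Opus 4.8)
The plan is to prove Lemma \ref{regularityisomorphismext1} by connecting the combinatorial regularity condition (no applicable regularisation step) directly to the behaviour of the induction map on first extension groups between semisimples. The key observation is that for a directed normal bocs with group-like $\omega$, the differential $\partial_0\colon B\to W$, $\partial_0(a)=\omega a-a\omega$, precisely controls the failure of induction to be an isomorphism on $\Ext^1$. I would set $\mathbb{L}=B/\rad{B}$ and let $\Delta=R\otimes_B\mathbb{L}$ be the corresponding semisimple-induced module (the direct sum of standard modules of $R$), and work throughout with the description of representations of the bocs.

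First I would make explicit how $\Ext^1_B(\mathbb{L},\mathbb{L})$ and $\Ext^1_R(\Delta,\Delta)$ are computed. Since $B$ is directed, $\Ext^1_B(\mathbb{L},\mathbb{L})$ is read off from the arrows of the quiver of $B$ (equivalently, from a minimal generating set of $\radoperator(B)/\radsquare{B}$), while $\Ext^1_R(\Delta,\Delta)$ is computed from the bocs data: the generators of $\overline{W}=\ker\varepsilon$ contribute the ``new'' extensions coming from the bimodule, and the differential $\partial_0$ describes which $B$-extensions become trivial after induction. Concretely, the induction map $\Ext^1_B(\mathbb{L},\mathbb{L})\to\Ext^1_R(\Delta,\Delta)$ fits into an exact sequence whose relevant cokernel and kernel are governed by the image of $\partial_0$ modulo the subspace spanned by products $c_i\psi_i b_i$. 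This is exactly the structure appearing in Proposition \ref{regularisation}.

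Next I would translate the two conditions. By Proposition \ref{regularisation}, regularity fails precisely when there exists $a\in B$ with $\partial_0(a)=\lambda\psi+\sum_i c_i\psi_i b_i$, $\lambda\neq 0$, $\psi\neq\psi_i$ generators of $\overline{W}$. Such an $a$ produces, on the one hand, a nontrivial element in the kernel of induction on $\Ext^1$ (the $B$-extension encoded by the relevant component of $\partial_0(a)$ becomes split after inducing, because $\omega a-a\omega$ witnesses a homotopy), and conversely any failure of injectivity or surjectivity of the induction map yields such an $a$. Thus non-regularity is equivalent to the induction map failing to be an isomorphism, which is the contrapositive of the desired equivalence. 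I would assemble this by showing (1)$\Rightarrow$(2): if no such $a$ exists then $\partial_0$ induces an isomorphism between the ``diagonal'' extension data of $B$ and the extension data carried by the generators of $\overline{W}$, forcing induction to be bijective; and (2)$\Rightarrow$(1): a witness $a$ to non-regularity directly produces a nonzero kernel or cokernel element.

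The main obstacle will be setting up the precise dictionary between the bocs-theoretic quantities ($\partial_0$, the generators $\psi,\psi_i$, the group-like $\omega$) and the homological $\Ext^1$ computations, and verifying that the exactness conditions in Proposition \ref{regularisation} match exactly the isomorphism condition in (2) rather than merely an injectivity or surjectivity statement. In particular one must check that directedness guarantees there are no higher-degree interference terms obstructing the identification, and that the normality (existence of $\omega$) is what makes $\partial_0$ well-defined and makes the relevant complex computing $\Ext^1_R(\Delta,\Delta)$ have the expected shape. I would rely on \cite[Lemma 5.3]{KM16} and \cite[Proposition 3.11]{KM16} for the technical bookkeeping, but the conceptual crux is identifying the cokernel of $\partial_0$ (restricted to the appropriate graded piece) with $\Ext^1_R(\Delta,\Delta)$ and its kernel-interaction with $\Ext^1_B(\mathbb{L},\mathbb{L})$.
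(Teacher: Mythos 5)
Your conceptual outline identifies the right mechanism: for a normal bocs the differential $\partial_0(a)=\omega a-a\omega$ measures the failure of $B$-linear splittings to extend to splittings in the category of bocs representations (equivalently, over the right algebra $R$), and the regularisation of Proposition \ref{regularisation} removes exactly a pair $(a,\psi)$ witnessing this failure. Be aware, though, that the paper itself offers no proof of Lemma \ref{regularityisomorphismext1}: it attributes the statement to Ovsienko and refers to \cite[Lemma 5.3]{KM16}, so there is no argument in the text to compare against line by line.

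As a proof, however, your proposal has a genuine gap, and it is the one you yourself flag as the ``main obstacle'': the dictionary identifying the kernel and cokernel of $\Ext^1_B(\mathbb{L},\mathbb{L})\to\Ext^1_R(\Delta,\Delta)$ with data extracted from $\partial_0$ is never actually constructed, and you propose to take it from \cite[Lemma 5.3]{KM16} --- but that reference \emph{is} the statement being proved, so the argument as written is circular. A non-circular route would be: (a) observe that surjectivity of the map in (2) is automatic, since normality gives a surjective counit and directedness makes $\overline{W}$ a projective bimodule, hence left projectivising and right projective, so Lemma \ref{lem.proj.ext} applies and condition (2) reduces to injectivity on $\Ext^1$; (b) compute morphisms after induction via the fully faithful functor $M\mapsto R\otimes_B M$ from bocs representations to $\modu R$, under which $\Hom_R(R\otimes_B M,R\otimes_B N)\cong\Hom_B(W\otimes_B M,N)$ because $W$ is finitely generated projective as a left $B$-module, and use the left $B$-module decomposition $W=B\omega\oplus\overline{W}$ to show that the induced extension attached to an arrow $a$ of the quiver of $B$ splits over $R$ if and only if $\partial_0(a)$ contains a generator of $\overline{W}$ with nonzero scalar coefficient modulo decomposable terms, i.e. if and only if $a$ is a witness to non-regularity. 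Steps (a) and (b) are exactly the content your sketch defers to the literature; without them the proposal restates the equivalence rather than proving it.
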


Here, $\mathbb{L}$ is a complete sum of representatives of simple 
$B$-modules and $\Delta$ is a complete sum of representatives of
standard $R$-modules. 
\bigskip

We are now ready to prove the main result of this article.

\begin{thm} \label{maintheorem}
  Let $\Bbbk$ be an algebraically closed field. Then there is a one-to-one correspondence:  
\[\begin{tikzcd}
\{\text{quasi-hereditary algebras  with (normal)  homological exact Borel subalgebra}\}\arrow[<->]{d}{1-1}  \\ 
\{\text{directed (normal) bocses}\},
\end{tikzcd}
\]
which restricts  to a one-to-one correspondence:
\[\begin{tikzcd}
\{\text{quasi-hereditary algebras  with {\em regular} homological exact Borel subalgebra}\}\arrow[<->]{d}{1-1}  \\
\{\text{{\em regular} directed bocses}\}.
\end{tikzcd}
\]
\end{thm}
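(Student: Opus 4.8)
The plan is to realise the stated bijection through two mutually inverse assignments and to check that each lands in the correct class. To a quasi-hereditary $\Lambda$ with exact Borel subalgebra $B$ I assign the right dual coring $W=\Hom_{B^{\op}}(\Lambda,B)$, and to a bocs $(B,W)$ I assign its right algebra $R$ (Definition \ref{def.right.alg}). I would first verify that the first assignment produces a directed bocs and the second a homological exact Borel subalgebra, then that the two assignments are inverse to one another, and finally read off the normal and regular refinements.

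For the forward direction, assume $B$ is a homological exact Borel subalgebra of the quasi-hereditary algebra $\Lambda$. By Lemma \ref{projectivegenerator}, $\Lambda$ is a projective generator as a right $B$-module, so Theorem \ref{surjectivecounit}(i) makes $W$ a $B$-coring with surjective counit, and $B$ is directed since it is an exact Borel subalgebra. Property (H) is exactly the hypothesis of Theorem \ref{kernelprojectivising}, whence $\overline W=\ker\varepsilon$ is left projectivising. To upgrade this to ``$\overline W$ is a projective bimodule'' I would apply Proposition \ref{perfect}, whose remaining input is projectivity of $\overline W$ as a right $B$-module; I expect to extract this from the quasi-hereditary data, for instance through the identification $W\cong D(\Lambda\otimes_B DB)$ and the injectivity of $D\Lambda$ over $B$ used in Lemma \ref{projectivegenerator}. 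For the order condition I would feed $N=L_B(j)$ into the sequence \eqref{equation4} together with the isomorphism $\Hom_B(W,N)\cong\Lambda\otimes_B N$ to obtain $\Delta(j)/L_B(j)\cong\Hom_B(\overline W,L_B(j))\cong\bigoplus_q\nabla_B(q)^{n_{jq}}$, where $n_{jq}$ is the multiplicity of $Be_j\otimes_\Bbbk e_qB$ in $\overline W$ and where each $\nabla_B(q)$ is injective (as in Lemma \ref{projectivegenerator}). The quasi-hereditary order then forces $n_{jq}\neq 0\Rightarrow q<j$, so that $(B,W)$ is directed.

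For the backward direction, let $(B,W)$ be a directed bocs with right algebra $R$. The theorem of \cite{KKO14} recalled above gives that $R$ is quasi-hereditary with exact Borel subalgebra $B$; a double-dual argument reconstructs $W\cong\Hom_{B^{\op}}(R,B)$, so that this $W$ is the right dual coring of the extension $B\subseteq R$. Now Lemma \ref{projectivegenerator} together with Theorem \ref{surjectivecounit}(i) give surjectivity of the counit, and projectivity of $\overline W$ as a bimodule supplies both right projectivity and, via $\overline W\otimes_B(B/\rad{B})$, the left projectivising property. Lemma \ref{lem.proj.ext} then returns exactly property (H), so $B$ is a homological exact Borel subalgebra of $R$.

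That the assignments are mutually inverse is double duality for finitely generated projective modules: the evaluation maps $\Lambda\to\Hom_B(\Hom_{B^{\op}}(\Lambda,B),B)$ and $W\to\Hom_{B^{\op}}(\Hom_B(W,B),B)$ are isomorphisms, and the defining diagram of $\mu$ (with $\mu$ dual to the multiplication $m$) identifies the right-algebra multiplication with that of $\Lambda$, giving $R\cong\Lambda$ as algebras. The normal refinement is immediate from Theorem \ref{surjectivecounit}(ii), which pairs normality of $B$ with normality of $W$. For the regular refinement I would use that a homological exact Borel subalgebra already has the induction maps on $\Ext^i$ isomorphisms for $i\geq 2$, so that regularity adds precisely the degree-one isomorphism; by Lemma \ref{regularityisomorphismext1} this isomorphism $\Ext^1_B(\mathbb L,\mathbb L)\to\Ext^1_R(\Delta,\Delta)$ is equivalent to regularity of the (necessarily normal) directed bocs. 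I expect the main obstacle to be the two structural facts about $\overline W$ in the forward direction — that it is a projective bimodule (the right-projectivity half going beyond Theorem \ref{kernelprojectivising}) and that its summands satisfy $q<j$ — both of which require the quasi-hereditary and costandard structure rather than the purely ring-theoretic input of the previous section.
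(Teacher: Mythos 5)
Your skeleton matches the paper's proof almost step for step: Lemma \ref{projectivegenerator} plus Theorem \ref{surjectivecounit} to get a coring with surjective counit, Theorem \ref{kernelprojectivising} plus Proposition \ref{perfect} to make $\overline{W}$ a projective bimodule, the dual coring theorem for mutual inverseness, Theorem \ref{surjectivecounit}(ii) for the normal refinement, and Lemma \ref{regularityisomorphismext1} for the regular one; your backward direction via the theorem of \cite{KKO14} and Lemma \ref{lem.proj.ext} is also sound. The genuine gap is at the directedness of the resulting bocs, which is the heart of the theorem and which you dispatch in a single unjustified sentence. Your identification $\Delta(\mathtt{j})/L_B(\mathtt{j})\cong\Hom_B(\overline{W},L_B(\mathtt{j}))\cong\bigoplus_{\mathtt{q}}\nabla_B(\mathtt{q})^{n_{\mathtt{j}\mathtt{q}}}$ is correct, but it is a statement about the \emph{$B$-module} structure of $\Delta(\mathtt{j})$, whereas the quasi-hereditary order constrains the \emph{$\Lambda$-module} structure. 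To conclude $n_{\mathtt{j}\mathtt{q}}\neq 0\Rightarrow \mathtt{q}<\mathtt{j}$ you would need, for instance, that $[\Delta_\Lambda(\mathtt{j})|_B:L_B(\mathtt{q})]$ vanishes for $\mathtt{q}>\mathtt{j}$ and equals $1$ for $\mathtt{q}=\mathtt{j}$, i.e.\ that $\Lambda$- and $B$-composition multiplicities of standard modules agree; this is not among the results you invoke and needs its own induction along the order (using \cite[Theorem A]{Koe95}) or a substitute. The paper bypasses this issue entirely: it applies $\Hom_{B\otimes_\Bbbk B^{\op}}(-,\Hom_\Bbbk(L(\mathtt{j}),L(\mathtt{i})))$ to $0\to\overline{W}\to W\to B\to 0$ and uses the identification $\Ext^s_{B\otimes_\Bbbk B^{\op}}(W,\Hom_\Bbbk(L(\mathtt{j}),L(\mathtt{i})))\cong\Ext^s_\Lambda(\Delta(\mathtt{j}),\Delta(\mathtt{i}))$ for $s=0,1$ from \cite{KM16}, so that a bad summand of $\overline{W}$ produces a nonzero non-isomorphism $\Delta(\mathtt{j})\to\Delta(\mathtt{i})$ of $\Lambda$-modules, contradicting (QH1) and (QH2) directly.

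A second, smaller problem: you correctly notice that Proposition \ref{perfect} needs right projectivity of $\overline{W}$, which goes beyond the stated conclusion of Theorem \ref{kernelprojectivising}, but your proposed repair does not work as written. The injectivity of $D\Lambda$ as a left $B$-module (Lemma \ref{projectivegenerator}) is dual to projectivity of $\Lambda$ \emph{as a right} $B$-module, while the right $B$-module structure of $W\cong D(\Lambda\otimes_B DB)$ is dual to the left structure of $\Lambda\otimes_B DB$, which comes from ${}_B\Lambda$ --- and the exact Borel axioms say nothing about $\Lambda$ as a left $B$-module. So your suggested route conflates the two module structures. (In fairness, the paper is terse at the same point, asserting that Theorem \ref{kernelprojectivising} yields ``projectivising'' on both sides; but flagging the issue and then pointing at the wrong structure does not close it.)
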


\begin{proof}
The upwards map 
is given by sending a directed bocs to its right algebra $R=\Hom_B(W,B)^{\op}$. That it is well-defined was proved in \cite{KKO14}. For the map in the other direction note that 
by Lemma \ref{projectivegenerator} the algebra $\Lambda$ is a projective generator for $B$. Thus, $\Hom_{B^{\op}}(\Lambda,B)$ is a $B$-coring with surjective counit. Furthermore, Theorem \ref{kernelprojectivising} shows that $\overline{W}:=\ker\varepsilon$ is projectivising. Since the ground field is assumed to be algebraically closed, in particular perfect, Proposition \ref{perfect} implies that $\overline{W}$ is a projective bimodule and hence is a direct sum of bimodules of the form $Be_\mathtt{i}\otimes_\Bbbk e_\mathtt{j}B$. By the definition of exact Borel subalgebra, $B$ is directed. It remains to prove that, for each summand $Be_\mathtt{i}\otimes_\Bbbk e_\mathtt{j}B$ of $\Hom_{B^{\op}}(\Lambda,B)$, the inequality $\mathtt{i}>\mathtt{j}$ holds. Suppose that this is not the case, i.e. $\mathtt{i}\leq \mathtt{j}$. Note that $\Hom_\Bbbk(L_B(\mathtt{j}),L_B(\mathtt{i}))\cong  L_{B\otimes_\Bbbk B^{\op}}((\mathtt{j},\mathtt{i}))$, the simple module corresponding to the indecomposable projective bimodule $Be_\mathtt{j}\otimes_\Bbbk e_\mathtt{i}B$. Applying $\Hom_{B\otimes_\Bbbk B^{\op}}(-,\Hom_{\Bbbk}(L(\mathtt{i}),L(\mathtt{j})))$ to the exact sequence $0\to \overline{W}\to {W}\to B\to 0$ yields the long exact sequence 
\begin{equation}\label{equation5}
\begin{split}
0\to \Hom_{B\otimes_\Bbbk B^{\op}}(B,\Hom_\Bbbk(L(\mathtt{j}),L(\mathtt{i})))\to 
\Hom_{B\otimes_\Bbbk B^{\op}}(W,\Hom_\Bbbk(L(\mathtt{j}),L(\mathtt{i})))\\\to 
\Hom_{B\otimes_\Bbbk B^{\op}}(\overline{W},\Hom_\Bbbk(L(\mathtt{j}),L(\mathtt{i})))
\to \Ext^1_{B\otimes_\Bbbk B^{\op}}(B,\Hom_\Bbbk(L(\mathtt{j}),L(\mathtt{i})))\\\twoheadrightarrow
\Ext^1_{B\otimes_\Bbbk B^{\op}}(W,\Hom_\Bbbk(L(\mathtt{j}),L(\mathtt{i}))).
\end{split}
\end{equation}
Using that $\Ext^s_{B\otimes_\Bbbk B^{\op}}(B,\Hom_\Bbbk(L(\mathtt{j},L(\mathtt{i})))\cong \Ext^s_B(L(\mathtt{j}),L(\mathtt{i}))$ for $s=0,1$, the fact that 
\[\Ext^s_{B\otimes_\Bbbk B^{\op}}\left(W,\Hom_\Bbbk\left(L(\mathtt{j}),L(\mathtt{i})\right)\right)\cong \Ext^s_\Lambda(\Delta(\mathtt{j}),\Delta(\mathtt{i})),\]
for $s=0,1$, see e.g. \cite[Section 4]{KM16}, and the fact that $B$ is directed, and therefore $\Ext^1_B(L(\mathtt{j}),L(\mathtt{i}))=0$ for $\mathtt{j}\geq \mathtt{i}$, the long exact sequence
\eqref{equation5} can be rewritten to obtain the exact sequence
\begin{equation}
\begin{aligned}
0\longrightarrow \Hom_B(L(\mathtt{j}),L(\mathtt{i})) &\longrightarrow \Hom_\Lambda(\Delta(\mathtt{j}),\Delta(\mathtt{i}))\\
& \longrightarrow \Hom_{B\otimes_\Bbbk B^{\op}}(\overline{W},\Hom_\Bbbk(L(\mathtt{j}),L(\mathtt{i})))\longrightarrow 0\, .
\end{aligned}
\end{equation}
In particular, a summand $Be_\mathtt{i}\otimes_\Bbbk e_\mathtt{j}B$ of $\overline{W}$ gives rise to a  non-isomorphism
from $\Delta(\mathtt{j})$ to $\Delta(\mathtt{i})$.  
As $\Lambda$ is quasi-hereditary, this is impossible for $\mathtt{j}\geq \mathtt{i}$.  
Regularity follows from Lemma \ref{regularityisomorphismext1}. 
The two constructions are inverses to each other by the dual coring theorem \cite[3.7]{Swe75}; see \cite[17.11]{BW03}.
\end{proof}

The following example provides two instances of exact Borel subalgebras which
are not homological, respectively homological but not regular. 

\begin{ex}
\begin{enumerate}[(i)]
\item As explained in \cite[Appendix A.4]{KKO14}, in the case of the algebra  of Example~\ref{ex.quiver},
there is an 
 extension between $\Delta(\mathtt{1})$ and $\Delta(\mathtt{3})$ which does not come from an extension of $L_B(\mathtt{1})$ and $L_B(\mathtt{3})$.
\item As explained before, examples of non-regular exact Borel subalgebras correspond to non-regular bocses. An example was given in \cite[Appendix A.2]{KKO14} where the quasi-hereditary algebra is $\Lambda=\Bbbk\times M_2(\Bbbk)$ and 
\[B=\left\{(a,\begin{pmatrix}a&b\\0&c\end{pmatrix})|a,b,c\in \Bbbk\right\},\]
which is an exact Borel subalgebra coming from a non-regular (normal) bocs. 
\end{enumerate}
\end{ex}

\end{document}